\newtheorem{thm}{Theorem}
\newtheorem{cor}[thm]{Corollary}
\newtheorem{lem}[thm]{Lemma}
\newtheorem{prop}[thm]{Proposition}
\let\oldtocsection=\tocsection
\let\oldtocsubsection=\tocsubsection
\let\oldtocsubsubsection=\tocsubsubsection
\renewcommand{\tocsection}[2]{\hspace{0em}\oldtocsection{#1}{#2}}
\renewcommand{\tocsubsection}[2]{\hspace{2em}\oldtocsubsection{#1}{#2}}
\renewcommand{\tocsubsubsection}[2]{\hspace{4em}\oldtocsubsubsection{#1}{#2}}
\newcommand{\dpow}[2]{\frac{#1^{#2}}{#2!}}
\newcommand{\dpowq}[2]{\frac{#1^{#2}}{#2!_q}}
\newcommand{\dpowg}[2]{\frac{#1^{#2}}{#2!\,2^{\binom n2}}}
\newcommand{\dpowgy}[2]{\frac{#1^{#2}}{#2!\,(1+y)^{\binom n2} }}
\newcommand{\qbinom}[2]{\binom{#1}{#2}_{\!\!q}}
\newcommand{\tqbinom}[2]{\binom{#1}{#2}_{\!q}} 
\newcommand{\ubinom}[2]{\binom{#1}{#2}_{\!\!u}}  
\newcommand{\tubinom}[2]{\binom{#1}{#2}_{\!u}}  
\newcommand{\dpowbq}[2]{\frac{#1^{#2}}{#2!_q(1+y)^{\binom n2}  }}
\newcommand{\floor}[1]{\left\lfloor#1\right\rfloor}
\newcommand\sumz[1]{\sum_{#1=0}^\infty}
\newcommand{\inverse}{^{-1}}
\newcommand{\B}{\mathscr{B}}
\DeclareMathOperator{\source}{s}
\DeclareMathOperator{\e}{e}
\DeclareMathOperator{\des}{des}
\DeclareMathOperator{\ssc}{ssc}
\title{Counting acyclic and strong digraphs by descents}
\author{Kassie Archer}
\address[K. Archer]{University of Texas at Tyler, Tyler, TX 75799 USA}
\email{karcher@uttyler.edu}
\author{Ira M. Gessel$^*$}
\address[I. Gessel]{Brandeis University, Waltham, MA 02453 USA}
\email{gessel@brandeis.edu}
\author{Christina Graves}
\address[C. Graves]{University of Texas at Tyler, Tyler, TX 75799 USA}
\email{cgraves@uttyler.edu}
\author{Xuming Liang}
\address[X. Liang]{Harvey Mudd College, Claremont, CA 91711 USA}
\email{eliang@g.hmc.edu}
\date{March 20, 2020}
\thanks{$^*$Supported by a grant from the Simons Foundation (\#427060, Ira Gessel)}
\begin{document}

\begin{abstract}

A descent of a labeled digraph is a directed edge $(s, t)$ with $s > t$. We count strong tournaments, strong digraphs,  acyclic digraphs, and forests by descents and edges. To count strong tournaments we use Eulerian generating functions and to count strong and acyclic digraphs we use a new type of generating function that we call a graphic Eulerian generating function.

\end{abstract}

\keywords{acyclic digraph, strong digraph, strong tournament, descent, Eulerian generating function, graphic generating function}

\maketitle

\section{Introduction}

A \emph{digraph} $D$ consists of a finite vertex set $V$ together with a subset $E$ of $V\times V - \{\,(v,v): v\in V\,\}$, the set of edges of $D$. (We do not allow loops in our digraphs.) We call a digraph with vertex set $V$ a digraph \emph{on} $V$. We  assume that the vertices of our digraph are totally ordered, and for simplicity we take them to be integers. 

A \emph{descent} of a  digraph is an edge $(s,t)$ with $s>t$ and an \emph{ascent} is an edge $(s,t)$ with $s<t$. In this paper we count two important classes of digraphs, acyclic and strong, by edges and descents, generalizing the results of Robinson \cite{racyclic}. We also count strong tournaments and forests by descents.

A digraph is \emph{weakly connected} (or simply \emph{weak})
if its underlying graph is connected, and
is \emph{strongly connected} (or simply \emph{strong}) if for every two vertices $u$ and $v$ there is a directed path from $u$ to $v$ (allowing the empty path if $u=v$). The
\emph{weak} and \emph{strong components} of a digraph are the maximal weakly or
strongly connected subgraphs. Note that every edge is contained in a weak component but there may be edges not contained in any strong component. A \emph{source strong component} of a digraph is a strong component with no edges entering it from outside the component. (Robinson \cite{racyclic,rstrong} calls these \emph{out-components}.)  

We use the notation $[n]$ to denote the set $\{1,2,\dots,n\}$.
Given a digraph $D$ on $[n]$, we denote by $\e(D)$  the total number of edges of $D$, and by $\des(D)$  the number of descents of $D$. We define the \emph{descent polynomial} for a family of digraphs $\B_n$ on $[n]$ to be 
$$b_n(u) = \sum_{D \in \B_n} u^{\des(D)}.$$  
The coefficient of $u^k$ in $b_n(u)$ is the number of digraphs in $\B_n$ with exactly $k$ descents. Similarly, the \emph{descent-edge polynomial} $b_n(u,y)$ of $\B_n$ is defined as 
$$b_n(u,y) = \sum_{D \in \B_n} u^{\des(D)}y^{\e(D)}.$$ 
The coefficient of $u^k y^m$  in $b_n(u,y)$ is the number of digraphs in $\B_n$ with exactly $m$ edges, $k$ of which are descents.  
Note that $b_n(u) = b_n(u,1)$.

This paper is organized as follows. In Section~\ref{sec:digraphs}, we introduce several families of graphs and give known formulas for enumerating these families.   
In Section~\ref{sec:strong}, we enumerate strong tournaments  by the number of descents and enumerate strong digraphs by both the number of descents and edges. In Section~\ref{sec:acyclic}, we enumerate acyclic digraphs by the number of edges and descents; we also derive a formula for rooted trees and forests with a given number of descents and leaves.

\section{Families of digraphs}\label{sec:digraphs}
We are concerned primarily with four types of digraphs: strong tournaments, strong digraphs, acyclic digraphs, and trees.

\subsection{Strong tournaments} A digraph is a \emph{tournament} if there is exactly one directed edge between each pair of vertices. There are $2^{\binom{n}{2}}$ tournaments on $[n]$ since for any two vertices $u$ and $v$, a tournament contains the edge $(u,v)$ or $(v,u)$ but not both.
   
In \cite{mm}, Moon and Moser  found a formula for the probability that a randomly chosen tournament is strongly connected. Equivalently, they showed that the number $t_n$ of strong tournaments on $n \ge 1$ labeled vertices is given by the recurrence 
\begin{equation}
\label{e-mm1}
t_n= 2^{\binom{n}{2}} - \sum_{k=1}^{n-1} \binom{n}{k}2^{\binom{n-k}{2}} t_k.
\end{equation}
This recurrence is equivalent to the generating function relation
\begin{equation*}
\sum_{n=1}^\infty t_n \frac{x^n}{n!} = 1-\biggl(\sum_{n=0}^\infty 2^{\binom n2}\frac{x^n}{n!}\biggr)^{-1}.
\end{equation*}
The first few values of $t_n$ are $t_1=1, t_2=0, t_3=2, t_4=24, t_5=544,$ and $t_6=22320$.
In Section \ref{sec:strong tournaments} we generalize these formulas to count strong tournaments by descents, replacing the exponential generating functions with Eulerian generating functions.

\subsection{Strong digraphs}

Strong digraphs were first counted by Liskovets \cite{L69}, using a system of recurrences. Liskovets's recurrences were simplified by Wright \cite{wright}, who showed that the number $s_n$ of strong digraphs on $[n]$ is given by
\begin{equation}
\label{e-wright1}
s_n = \eta_n + \sum_{k=1}^{n-1} \binom{n-1}{k-1} s_k\eta_{n-k},
\end{equation}
where
\begin{equation}
\label{e-wright2}
\eta_n = 2^{n(n-1)} - \sum_{k=1}^{n-1} \binom{n}{k}2^{(n-1)(n-k)} \eta_k.
\end{equation}
The first few values of $s_n$ are $s_1=1, s_2=1, s_3=18, s_4=1606$, and $s_5=565080$.

A more direct approach to counting strong digraphs was given by Robinson \cite{racyclic} (see also \cite{rstrong}). Robinson's method will be discussed in more detail in Section \ref{SectionStrongDi}, and it is the basis for our approach to counting strong digraphs by edges and descents. We use a new kind of generating function that we call an \emph{Eulerian graphic generating function} whose properties are introduced in Section 
\ref{SectionEulerianGraphic}.

De Panafieu and Dovgal \cite{pd} have also counted acyclic and strong digraphs using an approach similar to Robinson's. 
A different approach to counting strong digraphs has been given by Ostroff \cite{ostroff}.

\subsection{Acyclic digraphs} 
An \emph{acyclic} digraph is a digraph with no directed cycles. That is, there is no nonempty directed path from any vertex  to itself. Robinson \cite{racyclic} 
showed that the number $a_n$ of acyclic digraphs on  $[n]$ 
is given by the  generating function
\[ \sum_{n=0}^\infty a_n\dpowg xn = \left( \sum_{n=0}^\infty  (-1)^n \dpowg xn\right)^{-1}.\]
The first few values of $a_n$ are $a_0=1, a_1=1, a_2=3, a_3=25,a_4=543,$ and $a_5=29281$.
(Robinson  had earlier \cite{racyclic1} found a different recurrence for counting acyclic digraphs that we will not discuss here.)

Using an approach similar to Robinson's, in Section~\ref{sec:acyclic digraphs} we generalize his formula to derive an Eulerian graphic generating function for 
counting acyclic digraphs by edges and descents. 

Another proof of Robinson's  formula was given by Stanley \cite{sacyclic}, using properties of chromatic polynomials of graphs. In Section \ref{sec:chromatic} we generalize Stanley's proof to count acyclic digraphs by edges and descents, using a generalization of the chromatic polynomial related to the chromatic quasisymmetric function of Shareshian and Wachs \cite{sw}.

\subsection{Trees} We consider a rooted tree  to be an acyclic digraph in which one vertex (the root) has outdegree 0 and every other vertex has outdegree 1. 
It is well known that there are $n^{n-1}$ rooted trees on $n$ vertices. More relevant to our results is that the exponential generating function $\sum_{n=1}^\infty n^{n-1}x^n/n!$ for rooted trees is the compositional inverse of $xe^{-x}$. In Section \ref{sec:trees} we 
use a variation of our first approach to counting acyclic digraphs to give a simple formula for the compositional inverse of the exponential generating function for counting rooted trees by descents.

\section{Strong Tournaments and Strong Digraphs}\label{sec:strong}

\subsection{Strong Tournaments}\label{sec:strong tournaments}

We first study strong tournaments, which are easier to enumerate than acyclic and strong digraphs. 
Since every tournament on $n$ vertices has exactly $\binom{n}{2}$ edges, we count strong tournaments by descents only. First, let us determine the descent polynomial for all tournaments on $[n]$.
For each pair of vertices $\{s,t\}$, exactly one of $(s,t)$ and $(t,s)$ is an edge. One of these edges is a  descent and the other is an ascent, and thus the descent polynomial for all tournaments on $[n]$ is 
$(1+u)^{\binom n2}$.
 
To count strong tournaments by descents, we will need some properties of the 
\emph{$q$-binomial coefficients} (also called \emph{Gaussian binomial coefficients}).
We first define the $q$-factorial $n!_q$ by 
\[ n!_q = 1\cdot(1+q)\cdots(1+q+ \cdots +q^{n-2})\cdot (1+q+\cdots +q^{n-1}), \]
with $0!_q=1$.
The  $q$-binomial coefficients, denoted 
  $\tqbinom ni$, are  defined by 
\[ \qbinom ni = \frac{n!_q}{i!_q(n-i)!_q}.\]
For $q=1$ they reduce to ordinary binomial coefficients. 

The $q$-binomial coefficients have several combinatorial interpretations, but the one that  we need is given in Lemma \ref{l-qbin} below. For disjoint sets of integers $S$ and $T$, we call an element $(s,t)$ of $S\times T$  a \emph{descent} if $s>t$ and an \emph{ascent} if $s<t$. Let $\des(S,T)$ denote the number of descents in $S\times T$.

\begin{lem}
\label{l-qbin}
For nonnegative integers $n$ and $i$, with $i\le n$, we have
\begin{equation*}
\qbinom ni=\sum_{(S,T)} q^{\des(S,T)},
\end{equation*}
where the sum is over all ordered partitions $(S,T)$ of $[n]$ for which $|S|=i$.
\end{lem}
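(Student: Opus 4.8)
The plan is to induct on $n$, matching the combinatorial sum to a $q$-Pascal recurrence for the $q$-binomial coefficients. Write $f_{n,i}(q)=\sum_{(S,T)}q^{\des(S,T)}$ for the right-hand side, the sum over ordered partitions $(S,T)$ of $[n]$ with $|S|=i$. For the base case I take $n=0$: the only partition is $(\emptyset,\emptyset)$, which has no descents, so $f_{0,0}=1=\qbinom 00$. The boundary cases $i=0$ and $i=n$ are equally immediate, since then one block is empty and $\des(S,T)=0$, giving $f_{n,0}=f_{n,n}=1$.

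For the inductive step I condition on the block containing the largest element $n$. Because $n$ exceeds every other element of $[n]$, its contribution to $\des(S,T)$ is entirely determined by which block it lies in. If $n\in T$, then every pair $(s,n)$ with $s\in S$ is an ascent, so deleting $n$ yields an ordered partition of $[n-1]$ with $|S|=i$ and the same number of descents; these partitions contribute $f_{n-1,i}(q)$. If $n\in S$, then every pair $(n,t)$ with $t\in T$ is a descent, and there are exactly $|T|=n-i$ of them, so deleting $n$ yields an ordered partition of $[n-1]$ with $|S|=i-1$ and exactly $n-i$ fewer descents; these contribute $q^{n-i}f_{n-1,i-1}(q)$. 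Hence, for $0<i<n$,
\[
f_{n,i}(q)=f_{n-1,i}(q)+q^{n-i}f_{n-1,i-1}(q).
\]

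It then remains to check that $\qbinom ni$ satisfies this same recurrence, namely $\qbinom ni=\qbinom{n-1}{i}+q^{n-i}\qbinom{n-1}{i-1}$. This follows directly from the definition of $n!_q$ above: writing $n!_q=(1+q+\cdots+q^{n-1})\,(n-1)!_q$ and combining the two terms over the common denominator $i!_q(n-i)!_q$, the numerator collapses to $(n-i)!_q$ is cleared and one is left verifying the scalar identity $(1+\cdots+q^{n-i-1})+q^{n-i}(1+\cdots+q^{i-1})=1+\cdots+q^{n-1}$, which is clear. I would include this one-line verification for completeness. Together with the matching base and boundary cases, the induction gives $f_{n,i}(q)=\qbinom ni$.

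I do not expect a serious obstacle; the only point requiring care is bookkeeping the descents produced by the largest element, and in particular recording that it is the \emph{largest} (rather than the smallest) element whose placement forces the orientation of every incident pair. Stripping off $n$ rather than $1$ is exactly what produces the clean factor $q^{n-i}$ and aligns the combinatorial recurrence with the $q$-Pascal identity in the form stated. As an alternative one could map each partition to the binary word $w_1\cdots w_n$ with $w_j=1$ iff $j\in S$, note that $\des(S,T)=i(n-i)-\mathrm{inv}(w)$ is the complementary inversion statistic, and invoke the palindromicity of $\qbinom ni$; but the direct induction is cleaner and self-contained.
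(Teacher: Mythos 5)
Your proof is correct and takes essentially the same approach as the paper, which proves the lemma (via the cited reference) by showing that the right-hand side satisfies a $q$-Pascal recurrence and inducting; the only difference is that you condition on the largest element $n$ rather than the smallest, so you obtain the mirror-image recurrence $\tqbinom ni=\tqbinom{n-1}{i}+q^{n-i}\tqbinom{n-1}{i-1}$ in place of the paper's $\tqbinom{n}{i} = q^i \tqbinom{n-1}{i} +\tqbinom{n-1}{i-1}$, and both are valid with your verification being sound.
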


This lemma is proved in \cite[Lemma 5.1]{qexp} by showing that the right side satisfies the same recurrence as the left side, $\tqbinom{n}{i} = q^i \tqbinom{n-1}i +\tqbinom{n-1}{i-1}$. It can also be derived easily from other well-known combinatorial interpretations for the $q$-binomial coefficients such as \cite[p.~56, Proposition 1.7.1]{ec1}.

We can now give a recurrence for the descent polynomial for strong tournaments on $[n]$.
 (Although the variable $q$ is traditionally used in $\tqbinom ni$, we will replace $q$ with $u$, as we are using the variable $u$ to weight descents.) Notice that setting $u=1$ gives the recurrence \eqref{e-mm1} of Moon and Moser.

\begin{thm}\label{StrongTournRec} Let $t_n(u)$ be the descent polynomial for the set of strong tournaments on $[n]$. Then for $n\ge1$ we have
\begin{equation}
\label{e-str}
 t_n(u) = (1+u)^{n \choose 2} - \sum_{k=1}^{n-1} \ubinom{n}{k} (1+u)^{n-k \choose 2}t_k(u).\end{equation}
\end{thm}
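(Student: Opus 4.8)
The plan is to generalize the classical Moon--Moser argument by decomposing an arbitrary tournament according to its source strong component. Since every tournament on $[n]$ has exactly $\binom{n}{2}$ edges and its descent polynomial is $(1+u)^{\binom n2}$, I would set up an inclusion-exclusion (or really a direct partition-based) identity: every tournament $D$ on $[n]$ has a \emph{unique} source strong component, that is, a unique maximal strong subtournament $C$ with no edges entering it from the rest of the digraph. I would first argue this uniqueness: because $D$ is a tournament, the condensation of $D$ (the acyclic digraph of strong components) is itself a transitive tournament, hence has a unique source, so the source strong component is well defined.

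The key combinatorial step is then to count all tournaments by descents, classifying each according to which vertex set $S$ carries its source strong component. If the source strong component lives on a vertex set $S$ of size $k$, then (i) the induced subtournament on $S$ is an arbitrary strong tournament on $S$, contributing a factor weighted by $t_k(u)$ after accounting for the relabeling; (ii) every edge between $S$ and its complement $T=[n]\setminus S$ must point from $S$ into $T$ (no edges enter the source component), so these edges are \emph{forced}; and (iii) the induced subtournament on $T$ is arbitrary. The descent contributions factor across these three groups of edges, and here is where Lemma~\ref{l-qbin} enters: summing $u^{\des(S,T)}$ over all ways to choose the size-$k$ subset $S$ produces exactly the $u$-binomial coefficient $\tubinom{n}{k}$, since the forced $S$-to-$T$ edges contribute a descent precisely when $s>t$ for $s\in S$, $t\in T$.

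Carrying this out, I would write the total descent polynomial $(1+u)^{\binom n2}$ of all tournaments on $[n]$ as a sum over the size $k\in\{1,\dots,n\}$ of the source strong component:
\begin{equation*}
(1+u)^{\binom n2} = \sum_{k=1}^{n} \ubinom{n}{k}\, t_k(u)\, (1+u)^{\binom{n-k}{2}}.
\end{equation*}
The $k=n$ term is $t_n(u)$ itself (the whole tournament is strong), and the factor $(1+u)^{\binom{n-k}{2}}$ is the descent polynomial of the arbitrary subtournament on the $n-k$ remaining vertices. Isolating the $k=n$ term and moving the rest to the other side yields exactly \eqref{e-str}. To make this rigorous I would verify that the three edge-groups partition the $\binom n2$ edges of a tournament on $[n]$ and that descent counts are additive over this partition, so that $u^{\des(D)}$ factors as the product of the three contributions.

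The main obstacle I anticipate is justifying that the source strong component is unique and that the $S$-to-$T$ edges are all forced to be ascents-or-descents-by-position rather than free choices — in other words, confirming that conditioning on the source component set $S$ really does decouple the three edge groups, with the $S$-to-$T$ edges being entirely determined (all oriented out of $S$) and hence contributing the fixed weight $u^{\des(S,T)}$ that Lemma~\ref{l-qbin} sums to $\tubinom{n}{k}$. Once this decoupling is established, setting $u=1$ recovers $2^{\binom{n-k}{2}}$ and $\binom nk$, reducing \eqref{e-str} to the Moon--Moser recurrence \eqref{e-mm1}, which is a good consistency check on the argument.
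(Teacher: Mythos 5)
Your proposal is correct and follows essentially the same route as the paper's proof: decompose each tournament on $[n]$ by its unique source strong component on a vertex set $S$, note that all $S\times T$ edges are forced, use Lemma~\ref{l-qbin} to sum the descent weights $u^{\des(S,T)}$ over the choices of $S$ into the coefficient $\tubinom{n}{k}$, obtain the identity $(1+u)^{\binom n2} = \sum_{k=1}^{n} \tubinom{n}{k} (1+u)^{\binom{n-k}{2}}t_k(u)$, and solve for the $k=n$ term. Your extra justification of uniqueness via the condensation being a transitive tournament is a detail the paper asserts without proof, but otherwise the arguments coincide.
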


\begin{proof} 
Every nonempty tournament has a unique source strong component. Thus every tournament on $[n]$, for $n\ge 1$, can be constructed uniquely by choosing an ordered partition $(S,T)$ of $[n]$, with $S$ nonempty,  then constructing a strong tournament on $S$  and an arbitrary tournament on $T$, and adding all edges in $S\times T$. 
By Lemma \ref{l-qbin}, the contribution to the descent polynomial for all tournaments on $[n]$ with $|S|=k$ is $\tubinom{n}{k} t_k(u)\, (1+u)^{n-k \choose 2}$.
Thus
\begin{equation}
\label{e-str1}
(1+u)^{n \choose 2} = \sum_{k=1}^{n} \ubinom{n}{k} (1+u)^{n-k \choose 2}t_k(u).
\end{equation}
Solving for $t_n(u)$ gives \eqref{e-str}.
\end{proof}

The first few values of the polynomials $t_n(u)$ are 
\[t_1(u) =1,\:  t_2(u)= 0, \: t_3(u) = u + u^2, 
\: \text{ and } \: t_4(u) =  u+6{u}^{2}+10{u}^{3}+6{u}^{4}+{u}^{5}.
\]
Coefficients of $t_n(u)$ for larger $n$ can be easily computed from \eqref{e-str} and are given in Table~\ref{chartStrong}.
It is not difficult to show that $t_n(u)$ is a polynomial of degree $\binom n2 -1$ for $n\ge3$. Also, since reversing all the edges of a strong tournament gives another strong tournament, $t_n(u)$ is symmetric; i.e., $t_n(u) = u^{\binom n2}t_n(1/u)$.

\begin{table}
\begin{tabular}{c||r|r|r|r}
\diagbox{$d$}{$n$}   & \multicolumn{1}{c|}{4} & \multicolumn{1}{c|}{5} & \multicolumn{1}{c|}{6}  & \multicolumn{1}{c}{7}  \\ \hline \hline
1  & 1 & 1 & 1 & 1\\
2 &  6 & 13 & 22 & 33\\
3 &  10 & 56 & 172 & 402\\
4 &  6 & 123 & 717 & 2,674\\
5 &  1 & 158 & 1,910 & 11,614\\
6 &  0 & 123 & 3,547 & 36,293\\
7 &  0 & 56 & 4,791 & 86,305\\
8 &  0 & 13 & 4,791& 161,529 \\
9 & 0  & 1 & 3,547 & 242,890\\
10 & 0 & 0  & 1,910 & 297,003\\
11 & 0 & 0  & 717 & 297,003\\
12  & 0 & 0 &172 & 242,890\\
 13  & 0 & 0 & 22 &161,529\\
 14 &  0 & 0 & 1 & 86,305\\
 15 & 0&0&0& 36,293\\
  16 & 0&0&0& 11,614\\
   17 & 0&0&0& 2,674\\
    18 & 0&0&0& 402\\
     19 & 0&0&0& 33\\
      20 & 0&0&0& 1\\
 \hline
 TOTAL & 24 & 544 & 22,320 & 1,677,488
\end{tabular}
\caption{The number of strong tournaments on $n$ vertices with $d$ descents for $4 \leq n \leq 7$; equivalently, the coefficients of $u^d$ in $t_n(u)$.}
\label{chartStrong}
\end{table}

The next result gives a divisibility property for $t_n(u)$.

\begin{prop}
The polynomial $t_n(u)$ is divisible by  $(1+u)^{\floor{n/2}}$.
\end{prop}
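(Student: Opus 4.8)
The plan is to argue by strong induction on $n$ using the recurrence \eqref{e-str}. Write $\nu(f)$ for the largest power of $1+u$ dividing a polynomial $f$; since $1+u$ is irreducible, $\nu$ is additive on products and satisfies $\nu(f-g)\ge\min\bigl(\nu(f),\nu(g)\bigr)$. The base case $n=1$ is immediate since $\floor{1/2}=0$. For the inductive step I would assume $\nu\bigl(t_k(u)\bigr)\ge\floor{k/2}$ for every $k<n$ and then show that \emph{each} summand on the right-hand side of \eqref{e-str} is divisible by $(1+u)^{\floor{n/2}}$; the divisibility of $t_n(u)$ follows at once, since it is a signed sum of those summands.

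The leading term $(1+u)^{\binom n2}$ causes no trouble because $\binom n2\ge\floor{n/2}$ for all $n\ge1$. For a general summand $\tubinom nk(1+u)^{\binom{n-k}2}t_k(u)$ with $1\le k\le n-1$, the inductive hypothesis and additivity of $\nu$ give
\[
\nu\Bigl(\tubinom nk(1+u)^{\binom{n-k}2}t_k(u)\Bigr)\ \ge\ \binom{n-k}2+\floor{k/2},
\]
where I have simply dropped the nonnegative term $\nu\bigl(\tubinom nk\bigr)$. Writing $j=n-k$, the heart of the matter is the elementary inequality
\[
\binom j2+\floor{(n-j)/2}\ \ge\ \floor{n/2},
\]
which I would check by separating the four parity cases of $j$ and $n$. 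It holds for every $j$ with $1\le j\le n-1$ \emph{except} in the single case $j=1$ with $n$ even, where the left-hand side falls short by exactly one.

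This exceptional case $k=n-1$, $n$ even, is precisely where the factor $\tubinom nk$ must be brought in. Here $\tubinom{n}{n-1}=\tubinom n1=1+u+\cdots+u^{n-1}$, and for even $n$ this factors as $(1+u)(1+u^2+\cdots+u^{n-2})$, so $\nu\bigl(\tubinom n1\bigr)\ge1$. That one extra unit of valuation exactly cancels the deficit, so this summand too is divisible by $(1+u)^{\floor{n/2}}$, completing the induction. I expect the main obstacle to be not any deep fact about Gaussian binomial coefficients but the bookkeeping in the parity inequality, together with noticing that the boundary summand $k=n-1$ is the only place where one needs $1+u$ to divide a $q$-binomial coefficient at all.
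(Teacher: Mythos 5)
Your proposal is correct and takes essentially the same approach as the paper: the paper divides $t_n(u)$ by $(1+u)^{\floor{n/2}}$ and shows the resulting recurrence has polynomial coefficients, which is exactly your valuation induction, including the same exponent inequality and the same exceptional case $k=n-1$ with $n$ even, handled by the same factorization $\tubinom{n}{n-1}=(1+u)(1+u^2+\cdots+u^{n-2})$. The only cosmetic difference is that the paper disposes of the cases $k\le n-2$ with a single bound, $\binom{n-k}{2}-\floor{n/2}+\floor{k/2}\ge\frac{(n-k)(n-k-2)-1}{2}\ge-\frac12$, rather than by checking parities.
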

\begin{proof}
Let $v_n(u)=t_n(u)/(1+u)^{\floor{n/2}}$. Then from \eqref{e-str} we obtain the recurrence
\begin{equation*}
v_n(u) = (1+u)^{\binom n2 -\floor{n/2}}-\sum_{k=1}^{n-1}\ubinom{n}{k} (1+u)^{\binom{n-k}{2}-\floor{n/2}+\floor{k/2}}v_k(u)
\end{equation*}
for $n\ge1$.
It is easy to check that  $\binom n2 -\floor{n/2}\ge0$, so
it suffices to show that the expression multiplied by $v_k(u)$ in the sum on the right  is a polynomial in $u$.

Let
\begin{equation*}
E(n,k) =\binom{n-k}{2}-\floor{\frac n2}+\floor{\frac k2}.
\end{equation*}
We first show that $E(n,k)\ge 0$ for $k<n-1$. Note that 
\begin{align*}
E(n,k) \geq  \binom{n-k}{2}-\frac n2+\frac {k-1}{2} \\
= \frac{(n-k)(n-k-2) -1}{2}.
\end{align*}
For $k \leq n-2$, this gives $E(n,k) \geq -1/2$, so since $E(n,k)$ is an integer,
we have $E(n,k)\ge0$.  In the case where $k=n-1$, we must consider the parity of $n$. If $n$ is odd, then $E(n,n-1) = 0$. However, if $n$ is even, $E(n,n-1) = -1$. To complete the proof it suffices to show that if $n$ is even then $\tubinom {n}{n-1}$ is divisible by $1+u$. But if $n$ is even then
\begin{equation*}
\ubinom{n}{n-1}= 1+u+u^2+\cdots +u^{n-1}=(1+u)(1+u^2+u^4+\cdots+u^{n-2}),
\end{equation*}
thus completing the proof.
\end{proof}

The coefficients of $t_n(u)/(1+u)^{\lfloor n/2\rfloor}$ seem to be nonnegative, but we are not able to prove this.

The recurrence of Theorem \ref{StrongTournRec} can also be expressed with generating functions.
An \emph{Eulerian generating function} \cite[p.~321, Example 3.18.1c]{ec1} is a generating function of the form
\begin{equation*}
\sumz n  a_n\dpowq xn.
\end{equation*}
Note that for $q=1$, this reduces to an exponential generating function.

The multiplication of Eulerian generating functions is similar to that of exponential generating functions. 
If
\begin{equation*}
a(x) = \sumz n  a_n \dpowq xn \quad \text{ and } \quad
b(x) = \sumz n  b_n \dpowq xn,
\end{equation*}
then
\begin{equation*}
a(x)b(x) = \sumz n c_n \dpowq xn, 
\end{equation*}
where the coefficient $c_n$ is given by
\begin{equation*}
c_n = \sum_{i=0}^n \qbinom ni a_i b_{n-i}.
\end{equation*}

The generating function for $t_n(u)$ can be derived directly from the formula given in Theorem \ref{StrongTournRec} and the multiplication property of Eulerian generating functions:
\begin{cor}
\label{StrongTournGen} 
Let $T(x) = \sum_{n=1}^\infty t_n(u) x^n/n!_u$ be the Eulerian generating function for strong tournaments by descents 
and let $U(x) = \sum_{n=0}^\infty (1+u)^{\binom n2}x^n/n!_u$
be the Eulerian generating function for all tournaments by descents.
Then 
\begin{equation*}
\label{e-stourn}
T(x) =1-U(x)^{-1}
\end{equation*}
and 
\begin{equation}
\label{e-stourn2}
U(x) = \frac{1}{1-T(x)}.
\end{equation}
\end{cor}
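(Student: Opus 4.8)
The plan is to read the identity \eqref{e-str1} as a statement about the product of the two Eulerian generating functions $T(x)$ and $U(x)$, and then solve for $T(x)$. The multiplication rule for Eulerian generating functions recorded just above the corollary is exactly the tool needed: it converts the $u$-binomial convolution appearing on the right-hand side of \eqref{e-str1} into an ordinary product of power series.

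First I would form the product $T(x)U(x)$. Writing $T(x)=\sum_{n=0}^\infty t_n(u)\,x^n/n!_u$ with the convention $t_0(u)=0$ (consistent with the sum in the corollary beginning at $n=1$), the multiplication rule gives that the coefficient of $x^n/n!_u$ in $T(x)U(x)$ is
\[
\sum_{k=0}^n \ubinom nk t_k(u)\,(1+u)^{\binom{n-k}2} = \sum_{k=1}^n \ubinom nk t_k(u)\,(1+u)^{\binom{n-k}2},
\]
where the second equality uses $t_0(u)=0$. By \eqref{e-str1}, for every $n\ge 1$ this sum equals $(1+u)^{\binom n2}$, which is precisely the coefficient of $x^n/n!_u$ in $U(x)$. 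Hence the products $T(x)U(x)$ and $U(x)$ agree in every coefficient of $x^n$ with $n\ge1$.

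The one point requiring care --- and the only real subtlety in the argument --- is the constant term. The product $T(x)U(x)$ has no constant term, since $t_0(u)=0$, whereas $U(x)$ has constant term $(1+u)^{\binom 02}=1$. Thus the two series differ by exactly $1$ in the constant term while agreeing in all higher coefficients, giving $T(x)U(x)=U(x)-1$. Since $U(x)$ has constant term $1$, it is invertible as a formal power series, so I can divide by $U(x)$ to obtain $T(x)=1-U(x)\inverse$, the first claimed identity. Finally, rearranging $T(x)U(x)=U(x)-1$ as $U(x)\bigl(1-T(x)\bigr)=1$ yields the equivalent form \eqref{e-stourn2}, namely $U(x)=1/(1-T(x))$.
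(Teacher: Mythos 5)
Your proof is correct and takes essentially the same approach as the paper's: both convert \eqref{e-str1}, via the multiplication rule for Eulerian generating functions, into the single identity $U(x) = 1 + T(x)U(x)$ and then solve for the two stated formulas. You have merely made explicit the steps the paper leaves to the reader, namely the coefficient-by-coefficient computation, the constant-term discrepancy, and the invertibility of $U(x)$ and $1-T(x)$ as formal power series.
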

\begin{proof}
Equation \eqref{e-str1} is equivalent to $U(x) =1+T(x)U(x)$ from which the two formulas follow easily.
\end{proof}

Equation \eqref{e-stourn2} has a simple combinatorial interpretation obtained by iterating the decomposition described in the proof of Theorem \ref{StrongTournRec}: every tournament may be decomposed into a sequence of strong tournaments with all edges between the strong tournaments oriented from left to right.

\subsection{Eulerian Graphic Generating Functions}
\label{SectionEulerianGraphic}

In this section, we introduce a new type of generating function which will be useful in enumerating both acyclic and strong digraphs by  descents and edges. This new generating function  is a generalization of a \emph{graphic generating function} (also called a \emph{special generating function} \cite{racyclic}
or \emph{chromatic generating function}
\cite[p.~321, Example 3.18.1c]{ec1}),
which is a generating function of the form
\begin{equation*}
\sumz n a_n \dpowgy xn,
\end{equation*}
often with $y=1$. Graphic generating functions were first used by Robinson \cite{racyclic} and by Read \cite{read} (in the case $y=1$, with a slightly different normalization); further applications of graphic generating functions have been given by 
Gessel and Sagan \cite{GS96}, Gessel \cite{decomp}, and de Panafieu and Dovgal \cite{pd}.

We define an \emph{Eulerian graphic generating function} to be a generating function of the form
\begin{equation}
\label{e-Eggf}
 \sumz n a_n  \dpowbq{x}{n}.
\end{equation}
Given two  Eulerian graphic generating functions $a(x)$ and $b(x)$ defined by
\[ a(x) = \sumz n a_n  \dpowbq{x}{n}\quad \text{and} \quad b(x) = \sumz n b_n \dpowbq{x}{n},\]
we multiply them to obtain
\[ a(x)b(x) = \sumz n c_n \dpowbq{x}{n} \]
where\[ c_n = \sum_{i=0}^n \qbinom ni (1+y)^{i(n-i)} a_i b_{n-i}. \]

In all of our formulas from here on we will modify the Eulerian graphic generating functions by taking $q=(1+uy)/(1+y)$. The combinatorial interpretation of these modified Eulerian graphic generating functions is explained by the following lemma.

\begin{lem}
\label{l-qbinom}
Let $q=(1+uy)/(1+y)$. Then $\tqbinom ni (1+y)^{i(n-i)}$ is a polynomial in $u$ and $y$, and the coefficient of $u^j y^m$ in  $\tqbinom ni (1+y)^{i(n-i)}$ is the number of ordered pairs $(S,A)$ where $S$ is an $i$-subset of $[n]$ and $A$ is an $m$-subset of $S\times ([n]-S)$ containing exactly $j$ descents.
\end{lem}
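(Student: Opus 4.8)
The plan is to prove the identity by substituting $q = (1+uy)/(1+y)$ directly into the combinatorial interpretation of $\tqbinom ni$ furnished by Lemma~\ref{l-qbin}, and then tracking how the two variables $u$ and $y$ jointly record descents, ascents, and edge-selection. First I would recall from Lemma~\ref{l-qbin} that $\tqbinom ni = \sum_{(S,T)} q^{\des(S,T)}$, where the sum runs over ordered partitions $(S,T)$ of $[n]$ with $|S|=i$; here $T = [n]-S$. For a fixed such pair, the set $S\times T$ has $i(n-i)$ elements, each of which is either a descent or an ascent, so if we write $d = \des(S,T)$ then the number of ascents is $i(n-i)-d$.

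The key computational step is to expand $\tqbinom ni (1+y)^{i(n-i)}$ using this sum. Writing $N = i(n-i)$, we have
\begin{equation*}
\tqbinom ni (1+y)^{N} = \sum_{(S,T)} q^{\des(S,T)} (1+y)^{N} = \sum_{(S,T)} \left(\frac{1+uy}{1+y}\right)^{\des(S,T)} (1+y)^{N}.
\end{equation*}
For a fixed $(S,T)$ with $d = \des(S,T)$ descents and hence $N-d$ ascents, the summand becomes $(1+uy)^{d}(1+y)^{N-d}$. I would then interpret this factored form combinatorially: the factor $(1+uy)^d$ ranges over subsets of the $d$ descent-edges of $S\times T$, where choosing a given descent contributes a factor $uy$ (recording one selected edge that is a descent, via $y$, and one descent, via $u$) and not choosing it contributes $1$; similarly $(1+y)^{N-d}$ ranges over subsets of the $N-d$ ascent-edges, each selected edge contributing a factor $y$ and unselected edges contributing $1$. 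Expanding both binomials, a term with total $y$-degree $m$ and $u$-degree $j$ corresponds exactly to a choice of an $m$-element subset $A \subseteq S\times T$ in which precisely $j$ of the chosen edges are descents.

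Assembling these pieces, the coefficient of $u^j y^m$ in $\tqbinom ni (1+y)^{i(n-i)}$ counts pairs $(S,A)$ where $S$ is an $i$-subset of $[n]$ and $A$ is an $m$-subset of $S\times([n]-S)$ containing exactly $j$ descents, which is the claim; polynomiality in $u$ and $y$ is immediate since each summand $(1+uy)^d(1+y)^{N-d}$ is a polynomial. I expect the main obstacle to be purely expository rather than mathematical: making the bookkeeping of the two roles of $y$ clear, namely that $y$ simultaneously tracks the total number of selected edges (both descents and ascents) while $u$ tracks only how many of those selected edges are descents, so that a selected descent carries the combined weight $uy$ whereas a selected ascent carries weight $y$. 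Once the substitution is made and the binomials $(1+uy)^d$ and $(1+y)^{N-d}$ are isolated, the interpretation of each factor follows directly and no delicate estimates are needed.
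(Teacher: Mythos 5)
Your proof is correct and is essentially the same as the paper's: both arguments apply Lemma~\ref{l-qbin}, substitute $q=(1+uy)/(1+y)$ so that each subset $S$ with $d$ descents contributes $(1+uy)^d(1+y)^{i(n-i)-d}$, and interpret that product as the generating function for subsets $A\subseteq S\times([n]-S)$ weighted by $u^{\des(A)}y^{|A|}$. The only difference is cosmetic (the paper groups subsets $S$ by their number of descents via coefficients $Q_{n,i,k}$ and runs the computation from the combinatorial side to the algebraic side, while you run it in the reverse direction), so nothing further is needed.
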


\begin{proof}
Let $S$ be an $i$-subset of $[n]$ such that $S\times([n]-S)$ has $k$ descents, and thus $i(n-i)-k$ ordered pairs that are not descents. Define the weight of a subset $A\subseteq S\times ([n]-S)$ to be $u^{\des(A)}y^{|A|}$. 
To count such weighted subsets of $S\times([n]-S)$ we specify $A$ by deciding which descents and ascents of $S\times([n]-S)$ are included in $A$.
Each descent in $S\times ([n]-S)$ can either be included in $A$, contributing a factor of $uy$ to the weight of $A$, or excluded, contributing a factor of 1. Similarly, each ascent in $S\times ([n]-S)$ can either be included in $A$, contributing a factor of $y$, or excluded, contributing a factor of 1. Thus the sum of the weights of all $A\subseteq S \times ([n]-S)$ is $(1+uy)^k (1+y)^{i(n-i) -k}$.

Now define $Q_{n,i,k}$ by 
\begin{equation*}
\qbinom ni = \sum_{k=0}^{i(n-i)} Q_{n,i,k}q^k.
\end{equation*}
Then by Lemma \ref{l-qbin},  $Q_{n,i,k}$ is the number of $i$-subsets $S$ of $[n]$ such that $S\times([n]-S)$ has $k$ descents.  Thus the sum over all $i$-subsets $S\subseteq[n]$ of the weights of all $A\subseteq S \times ([n]-S)$ is 
\begin{align*}\sum_{k=0}^{i(n-i)} Q_{n,i,k} (1+uy)^k (1+y)^{i(n-i)-k}
 &=(1+y)^{i(n-i)}\sum_{k=0}^{i(n-i)} Q_{n,i,k} \left(\frac{1+uy}{1+y}\right)^k\\
 &=(1+y)^{i(n-i)}\qbinom ni,
\end{align*}
where $q=(1+uy)/(1+y)$.
\end{proof}

The modified Eulerian graphic generating functions may be viewed another way. 
Note that 
\begin{equation*}
n!_q(1+y)^{\binom n2}=\prod_{i=1}^{n} (1+q+\cdots + q^{i-1}) (1+y)^{i-1}.
\end{equation*}
Setting $q=(1+uy)/(1+y)$ and letting $P(i)$ denote the $i$th factor in this product gives 
\begin{align*}
P(i)&:=
(1+y)^{i-1}+(1+uy)(1+y)^{i-2}+(1+uy)^2(1+y)^{i-3}+\cdots+(1+uy)^{i-1}\\
   &\phantom{:}=\frac{(1+y)^i-(1+uy)^i}{y(1-u)}.
\end{align*}
So if we let $F(n) = P(1)P(2)\cdots P(n)$, then the modified Eulerian graphic generating functions are of the form $\sumz n a_n x^n/F(n)$.

Note that if we set $u=1$, then $q=(1+uy)/(1+y)$ becomes 1. So in this case the 
Eulerian graphic generating function \eqref{e-Eggf} reduces to the ``ordinary" graphic generating function
\begin{equation*}
\sumz n a_n \dpowgy xn.
\end{equation*}

\subsection{Strong Digraphs}
\label{SectionStrongDi}

We now find a generating function for the descent-edge polynomial for the set of strong digraphs. 
Before beginning the proof, we need several preliminary definitions.  Let $\Delta$ be the linear transformation that converts an exponential generating function to an Eulerian graphic generating function. That is,
\[
\Delta\left(\sum_{n=0}^\infty a_n \frac{x^n}{n!}\right) = \sum_{n=0}^{\infty} a_n \dpowbq xn.
\]

Let $G(x)$ be the Eulerian graphic generating function for the descent-edge polynomials of all digraphs.  
To specify a digraph $D$ on $[n]$, for each possible edge $(s,t)$ we either include it in $D$ or exclude it. If $s>t$ then including $(s,t)$ as an edge contributes a factor $uy$ to the descent-edge weight of $D$ and excluding it contributes a factor of 1. Similarly, if $s<t$ then including $(s,t)$ as an edge contributes a factor $y$ to the descent-edge weight of $D$ and excluding it contributes a factor of 1. Since there are $\binom n2$ possible edges $(s,t)$ with $s>t$ and $\binom n2$  with $s<t$, 
the descent-edge polynomial of the set of all digraphs on $[n]$ is $(1+uy)^{n \choose 2}(1+y)^{n \choose 2}$
and therefore
\begin{equation}
\label{e-G(x)}
G(x) =\sum_{n=0}^{\infty} (1+uy)^{n \choose 2}   (1+y)^{n \choose 2}\dpowbq xn  =\sum_{n=0}^{\infty} (1+uy)^{n \choose 2} \frac{x^n}{n!_q}.
\end{equation}

Now let 
\[d_n(u,y;\beta)= \sum_D u^{\des(D)}y^{\e(D)}\beta^{\ssc(D)},\]
where the sum is over all digraphs $D$ on
the vertex set $[n]$,  $\e(D)$ is the number of edges of
$D$, and $\ssc(D)$ is the number of source strong components of $D$. As we have just seen,
\begin{equation}
\label{e-dn1}
d_n(u,y;1) = (1+uy)^{\binom n2}(1+y)^{\binom n2}.
\end{equation}

Let $s_n(u,y)$ be the descent-edge polynomial for the set of strong digraphs on $n$ vertices, and let 
\[S(x)=\sum_{n=1}^\infty s_n(u,y)\dpow xn.\] 
Define polynomials $v_n(u,y;\beta)$ by
\begin{equation}
\label{e-vn}
e^{\beta S(x)}=\sumz n v_n(u,y;\beta) \dpow xn.
\end{equation}
Then by  the ``exponential formula'' \cite[p.~5, Corollary 5.1.6]{ec2}, $v_n(u,y;\beta)$ is the descent-edge polynomial for digraphs on $n$ in which every weak component is strong, where each weak component is weighted $\beta$. The Eulerian graphic generating function for the polynomials $v_n(u,y;\beta)$ is thus $\Delta(e^{\beta S(x)})$.
We can now count strong digraphs by edges and descents, generalizing the result of Robinson who proved the case $u=1$ (and thus $q=1$) of the next result.

\begin{thm}
\label{t-strong}
Let $S(x)$ be the exponential generating function for the descent-edge polynomial for strong digraphs and let $G(x)$ be the Eulerian graphic generating function for all digraphs, given in \eqref{e-G(x)}.
Then
\begin{equation}
\label{e-strong}
S(x) = -\log \bigl(\Delta^{-1}(G(x)^{-1})\bigr).
\end{equation}
\end{thm}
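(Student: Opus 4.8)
The plan is to show that the claimed identity is equivalent to the single Eulerian graphic generating function relation $D(x)\cdot\Delta(e^{-S(x)}) = 1$, and then to prove this relation by a signed decomposition of digraphs according to their source strong components. Since $\Delta$ is an invertible linear operator on formal power series, applying $\log$ and then exponentiating shows that \eqref{e-strong} is equivalent to $e^{-S(x)} = \Delta^{-1}(D(x)^{-1})$, hence to $\Delta(e^{-S(x)}) = D(x)^{-1}$, that is, to $D(x)\,\Delta(e^{-S(x)}) = 1$. By \eqref{e-vn} and the definition of $\Delta$, the factor $\Delta(e^{-S(x)})$ is the Eulerian graphic generating function for the polynomials $v_n(u,y;-1)$, while $D(x)$ is the Eulerian graphic generating function for $d_n(u,y;1)$ by \eqref{e-D(x)} and \eqref{e-dn1}.

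Next I would multiply these two Eulerian graphic generating functions using the product rule with $q=(1+uy)/(1+y)$, so that the coefficient $c_n$ of $x^n/(n!_q(1+y)^{\binom n2})$ in the product is
\[ c_n = \sum_{i=0}^n \qbinom ni (1+y)^{i(n-i)}\, v_i(u,y;-1)\, d_{n-i}(u,y;1). \]
The goal is then to prove $c_0=1$ and $c_n=0$ for $n\ge1$. The combinatorial heart of the argument is to read off $c_n$ as a signed descent-edge enumerator. By Lemma \ref{l-qbinom}, the factor $\qbinom ni (1+y)^{i(n-i)}$ records the choice of an $i$-subset $S\subseteq[n]$ together with an arbitrary set of edges from $S$ to $[n]-S$, weighted by descents and edges; reading $v_i(u,y;-1)$ as a digraph on $S$ whose weak components are all strong (each weighted $-1$) and $d_{n-i}(u,y;1)$ as an arbitrary digraph on $[n]-S$, each term of $c_n$ assembles a digraph $D$ on $[n]$ together with a distinguished subset $S$ such that $D|_S$ has all weak components strong and no edge runs from $[n]-S$ into $S$, with weight $u^{\des(D)}y^{\e(D)}$ times $(-1)$ raised to the number of weak components of $D|_S$.

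The key structural observation I would establish is that these two conditions on $S$ hold precisely when $S$ is a union of source strong components of $D$, in which case the weak components of $D|_S$ are exactly the source strong components contained in $S$. Granting this, I would fix a digraph $D$ with $\ssc(D)=j$ and sum over the admissible $S$: these correspond to choosing any subset of the $j$ source strong components of $D$, so the signed weight contributed by $D$ is $u^{\des(D)}y^{\e(D)}\sum_{\ell=0}^j \binom j\ell (-1)^\ell = u^{\des(D)}y^{\e(D)}(1-1)^j$, which vanishes unless $j=0$. Since the condensation of any nonempty digraph is acyclic and therefore has a source, every nonempty digraph has at least one source strong component; thus the only surviving term comes from the empty digraph, giving $c_0=1$ and $c_n=0$ for $n\ge1$, which is exactly $D(x)\,\Delta(e^{-S(x)})=1$.

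I expect the main obstacle to be justifying the structural equivalence ``admissible $S$ $\iff$ union of source strong components'' cleanly: one direction requires checking that no edge can enter a union of source strong components (so such an $S$ is admissible and its weak components are those components), while the other requires showing that a weak component of $D|_S$ with no incoming edge from $[n]-S$ receives no incoming edge at all and hence is genuinely a source strong component of $D$. Once this equivalence and the corresponding count of $2^j$ admissible subsets are in hand, the sign cancellation and the generating-function bookkeeping are routine.
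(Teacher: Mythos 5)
Your proposal is correct and takes essentially the same route as the paper's proof: the same decomposition of a digraph according to a subset that is a union of its source strong components, the same use of Lemma \ref{l-qbinom} to handle the edges between that subset and its complement, and the same cancellation mechanism showing only the empty digraph survives. The only cosmetic difference is that the paper carries a generic weight $\alpha$ on source strong components (yielding the more general identity \eqref{e-vrec}) and then specializes to $\alpha=-1$ using $d_n(u,y;0)=0$, whereas you work directly at $\alpha=-1$ and make the binomial cancellation $(1-1)^j=0$ explicit; your careful verification of the structural equivalence (admissible $S$ if and only if $S$ is a union of source strong components) fills in a step the paper leaves implicit.
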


\begin{proof}
We will count in two ways ordered pairs $(D, C)$ where 
$D$ is a digraph on $[n]$ and $C$ is a subset of the set of source strong components of $D$. (We may identify $C$ with the digraph whose weakly connected components are the elements of the set $C$.)
  To such a pair we assign the weight $\beta^{|C|} u^{\des(D)} y^{\e(D)}$. We compute the sum of the weights of these pairs in two ways.

First, we may choose $D$ as an arbitrary digraph on $[n]$ and then choose $C$ as an arbitrary subset of the source strong components of $D$. Thus the sum of the weights is $d_n(u,y;\beta+1)$. 

Alternatively, we may  count pairs $(D,C)$ by first choosing a subset $T$ of $[n]$, constructing a set of strong digraphs $C$ on $T$, choosing a digraph $D'$ on $[n]-T$ and choosing a subset $E$ of $T\times( [n]-T)$. We then construct $D$ by adding to $D'$ the digraphs in $C$ together with the elements of $E$ as edges. Then $\des(D) = \des(E)+\des(C)+\des(D')$ and 
$\e(D) = |E| +\e(C)+ \e(D')$. It follows from Lemma \ref{l-qbinom}  that the sum of the weights of the pairs $(D,C)$ in which $C$ has a total of $i$ vertices is $(1+y)^{i(n-i)}\tqbinom{n}{i}v_i(u,y; \beta) d_{n-i}(u,y;1)$. Summing over $i$ and using  \eqref{e-dn1} gives
\begin{equation}
\label{e-vrec}
\sum_{i=0}^n (1+y)^{i(n-i)}\qbinom{n}{i}v_i(u,y;\beta) 
    d_{n-i}(u,y;1)=d_n(u,y;\beta+1),
\end{equation}
which is equivalent by \eqref{e-vn} to 
\begin{equation*}
\label{e-Dalpha}
\Delta(e^{\beta S(x)}) G(x)= \sumz n d_n(u,y;\beta+1) \dpowbq xn,
\end{equation*}
where $G(x)$ is given by \eqref{e-G(x)}. Now we set $\beta=-1$. Since $d_n(u,y;0)=0$ for $n>0$ we obtain
\begin{equation*}
\Delta(e^{- S(x)}) G(x)= 1.
\end{equation*}
Solving for $S(x)$ yields \eqref{e-strong}.
\end{proof}

We can now give extensions of Wright's recurrences \eqref{e-wright1} and \eqref{e-wright2} for the polynomials $s_n(u,y)$.
\begin{cor}\label{StrongDiRec}
The descent-edge polynomial for strong digraphs on $n$ vertices $s_n(u,y)$ satisfies the recurrence
\begin{equation}
\label{e-s-eta}
 s_n(u,y) = \eta_n(u,y) + \sum_{k=1}^{n-1}  \binom{n-1}{k-1} s_k(u,y)\eta_{n-k}(u,y), \  n\ge 1,
\end{equation}
where the polynomials $\eta_n(u,y)$ are determined by 
\begin{multline}
\label{e-etarec}
\qquad
\eta_n(u,y) = (1+y)^{\binom n2}(1+uy)^{\binom n2}\\
  -\sum_{k=1}^{n-1} \qbinom{n}{k}  (1+uy)^{\binom{n-k}{2}}
  (1+y)^{(n-k)(n+k-1)/2} \eta_k(u,y),
\qquad
\end{multline}
with $q = (1+uy)/(1+y)$.
\end{cor}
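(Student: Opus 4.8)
The plan is to read both recurrences off the single generating-function identity supplied by Theorem~\ref{t-strong}, by introducing the auxiliary sequence $\eta_n$ as essentially the coefficients of $D(x)^{-1}$ and then translating that one identity into the exponential picture (for \eqref{e-s-eta}) and into the Eulerian graphic picture (for \eqref{e-etarec}). Concretely, I would first set $\alpha=-1$ in the relation $\Delta(e^{\alpha S(x)})D(x)=\sum_n d_n(u,y;\alpha+1)\,x^n/\bigl(n!_q(1+y)^{\binom n2}\bigr)$ established inside the proof of Theorem~\ref{t-strong}; since $d_n(u,y;0)=0$ for $n>0$, this gives $\Delta(e^{-S(x)})\,D(x)=1$. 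I then \emph{define} $\eta_n(u,y):=-v_n(u,y;-1)$ for $n\ge1$, where the $v_n$ are the polynomials of \eqref{e-vn}. Because $v_0(u,y;-1)=1$, this is the same as declaring $e^{-S(x)}=1-\sum_{n\ge1}\eta_n\,x^n/n!$, and applying $\Delta$ turns the displayed identity into $(1-H(x))\,D(x)=1$, where $H(x)=\sum_{n\ge1}\eta_n\,x^n/\bigl(n!_q(1+y)^{\binom n2}\bigr)$ is the Eulerian graphic generating function with coefficient sequence $(\eta_n)$.

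To prove \eqref{e-etarec} I would expand $(1-H(x))D(x)=1$ with the product rule for Eulerian graphic generating functions, using the explicit coefficients $d_j=(1+uy)^{\binom j2}(1+y)^{\binom j2}$ of $D(x)$ read off from \eqref{e-D(x)}. Extracting the coefficient of $x^n/\bigl(n!_q(1+y)^{\binom n2}\bigr)$ for $n\ge1$ gives $\sum_{i=0}^n\qbinom ni(1+y)^{i(n-i)}g_i\,d_{n-i}=0$ with $g_0=1$ and $g_i=-\eta_i$ for $i\ge1$; peeling off the $i=0$ term (which yields $d_n$) and the $i=n$ term (which yields $\eta_n$) produces the recurrence after one simplification. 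The only point that needs care is the collapse of the powers of $(1+y)$: one checks $i(n-i)+\binom{n-i}2=\binom n2-\binom i2=(n-i)(n+i-1)/2$, so that $(1+y)^{i(n-i)}d_{n-i}$ carries exactly the factor $(1+uy)^{\binom{n-i}2}(1+y)^{(n-i)(n+i-1)/2}$ appearing in \eqref{e-etarec}.

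To prove \eqref{e-s-eta} I would instead stay with ordinary exponential generating functions. Applying $\Delta^{-1}$ to $(1-H(x))D(x)=1$ (equivalently, reading Theorem~\ref{t-strong} directly) gives $e^{-S(x)}=1-H_{\mathrm{e}}(x)$, where $H_{\mathrm{e}}(x)=\sum_{n\ge1}\eta_n\,x^n/n!$. Differentiating yields $S'(x)\bigl(1-H_{\mathrm{e}}(x)\bigr)=H_{\mathrm{e}}'(x)$, i.e.\ $S'=H_{\mathrm{e}}'+S'H_{\mathrm{e}}$; extracting the coefficient of $x^{n-1}/(n-1)!$ from the exponential generating function product $S'H_{\mathrm{e}}$ and reindexing the convolution by $k=n-j$ (so that $\binom{n-1}{j}$ becomes $\binom{n-1}{k-1}$) gives exactly \eqref{e-s-eta}. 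I expect the only real obstacle to be the bookkeeping of the two distinct generating-function products---the Eulerian graphic one with its $\qbinom ni(1+y)^{i(n-i)}$ factor for \eqref{e-etarec}, and the ordinary exponential one for \eqref{e-s-eta}---together with the exponent identity above; once the definition $\eta_n=-v_n(u,y;-1)$ is fixed, both recurrences are routine coefficient extractions from Theorem~\ref{t-strong}.
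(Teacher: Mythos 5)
Your proposal is correct and follows essentially the same route as the paper: it defines $\eta_n(u,y)=-v_n(u,y;-1)$ (equivalently $e^{-S(x)}=1-\sum_{n\ge1}\eta_n\,x^n/n!$), obtains \eqref{e-etarec} by extracting coefficients from the $\alpha=-1$ case of the convolution identity underlying Theorem~\ref{t-strong} (the paper's \eqref{e-vrec}), and obtains \eqref{e-s-eta} by differentiating the exponential relation and equating coefficients of $x^{n-1}/(n-1)!$. The exponent check $i(n-i)+\binom{n-i}{2}=(n-i)(n+i-1)/2$ that you flag is exactly the simplification the paper leaves implicit.
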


\begin{proof}
Let $E(x) = 1-e^{-S(x)}$, so
\begin{equation}
\label{e-SE}
S(x) = \log\frac{1}{1-E(x)},
\end{equation}
and define polynomials $\eta_n(u,y)$ by
$E(x) = \sum_{n=1}^\infty \eta_n(u,y) x^n/n!$. Thus for $n\ge1$, 
$\eta_n(u,y) = -v_n(u,y;-1)$, where $v_n(u,y;\beta)$ is defined in \eqref{e-vn}.
Then \eqref{e-etarec} is obtained by rearranging the case $\beta=-1$ of \eqref{e-vrec}, using \eqref{e-dn1} and $d_n(u,y;0)=0$ for $n>0$.

Differentiating \eqref{e-SE} with respect to $x$ and simplifying gives
\begin{equation*}
S'(x) = E'(x)+S'(x)E(x).
\end{equation*}
Equating coefficients of $x^{n-1}/(n-1)!$ gives \eqref{e-s-eta}.
\end{proof}

The first few values of the polynomials $s_n(u, y)$ are $s_1(u,y) =1, s_2(u,y)= uy^2$, and
\[ s_3(u,y) = uy^3 + u^2y^3 + uy^4 + 7u^2y^4 + u^3y^4 + 3u^2y^5+3u^3y^5 + u^3y^6.\] 
The values of $s_n(u,y)$ for larger $n$ can be easily computed from the recurrences of Corollary~\ref{StrongDiRec}. We provide the values of $s_n(u,1)$ and $s_n(1,y)$ for small values in Tables~\ref{NumberStrong}A and \ref{NumberStrong}B.

\begin{table}
\begin{tabular}{ccc}
\subcaptionbox{Number of strong digraphs on $n$ vertices with $d$ descents}{
\begin{tabular}{c||r|r|r|r}
\diagbox{$d$}{$n$}  & \multicolumn{1}{c|}{3}   & \multicolumn{1}{c|}{4} & \multicolumn{1}{c|}{5} & \multicolumn{1}{c}{6}    \\ \hline \hline
1  & 2&10 & 122 & 3,346\\
2 &  11&154 & 3,418 & 142,760\\
3 &  5&540 & 27,304 & 1,938,178\\
4 &  0&581 & 90,277 & 12,186,976\\
5 &  0&272 & 150,948 & 42,696,630\\
6 &  0&49 & 150,519 & 94,605,036 \\
7 &  0&0 &  95,088 & 145,009,210\\
8 &  0&0 &  37,797& 161,845,163 \\
9 & 0 &0 &  8,714 & 134,933,733\\
10 & 0&0 & 893 & 84,656,743\\
11 & 0 &0& 0  & 39,632,149\\
12  & 0 & 0&0  & 13,481,441\\
 13  & 0 & 0&0 &3,156,845\\
 14 &  0 & 0 &0& 455,917\\
 15 & 0&0& 0&30,649\\
 \hline
 TOTAL & 18 &1,606 & 565,080 & 734,774,776\\
\multicolumn{5}{c}{} \\
\multicolumn{5}{c}{} \\
\multicolumn{5}{c}{} \\
\end{tabular} }
& \quad &
\subcaptionbox{Number of strong digraphs on $n$ vertices with $e$ edges}{%
\begin{tabular}{c||r|r|r}
\diagbox{$e$}{$n$}   & \multicolumn{1}{c|}{3} & \multicolumn{1}{c|}{4} & \multicolumn{1}{c}{5}    \\ \hline \hline
3 &  2 & 0 & 0\\
4 &  9 & 6 & 0\\
5 &  6 & 84 & 24\\
6 &  1 & 316 & 720 \\
7 &  0 &  492 & 6,440\\
8 &  0 &  417& 26,875 \\
9 & 0  &  212 & 65,280\\
10 & 0 & 66 & 105,566\\
11 & 0 & 12  & 122,580\\
12  & 0 & 1  & 106,825\\
 13  & 0 & 0 & 71,700\\
 14 &  0 & 0 & 37,540\\
 15 & 0&0& 15,344\\
 16 & 0 & 0 & 4,835\\
 17 & 0 & 0 & 1,140\\
 18 & 0 & 0 & 190\\
 19 & 0 & 0 & 20\\
 20 & 0 & 0 & 1\\
 \hline
 TOTAL & 18 & 1,606 & 565,080
\end{tabular} } 
\end{tabular}
\caption{Number of strong digraphs by descents and edges}
\label{NumberStrong}
\end{table}

Note that setting $u=y=1$ in the recurrences of Corollary \ref{StrongDiRec} gives Wright's recurrences \eqref{e-wright1} and \eqref{e-wright2}, so Wright's $\eta_n$ is our $\eta_n(1,1)$. In fact, Wright also knew the corresponding recurrences for $s_n(1,y)$ and $\eta_n(1,y)$, counting strong digraphs by edges. He also stated that \eqref{e-wright1} and \eqref{e-wright2} look as if they should possess combinatorial interpretations, but that he was not able to find one. 
He wrote, ``We can show that $\eta_n$ is non-negative, though $\eta_2 = 0$. But some of the coefficients in the polynomials  $\eta_n(y)$ are negative and this makes it seem somewhat unlikely that $\eta_n$ has a simple combinatorial meaning." (Wright's $\eta_n(y)$ is our $\eta_n(1,y)$.) 

Despite Wright's pessimism, $\eta_n$ does have a simple combinatorial interpretation, which suggests a connection between the enumeration of strong tournaments and the enumeration of strong digraphs. 
If we multiply Moon and Moser's recurrence \eqref{e-mm1} for strong tournaments by $2^{\binom n2}$, we get
\begin{equation*}
2^{\binom n2}t_n= 2^{n(n-1)} - \sum_{k=1}^{n-1} \binom{n}{k}2^{(n-1)(n-k)}\cdot 2^{\binom k2}t_k.
\end{equation*}
Comparing with \eqref{e-wright2}, we see that Wright's $\eta_n$ is equal to our $2^{\binom n2}t_n$.

Thus \eqref{e-SE} for $u=y=1$ may be written
\begin{equation}
\label{e-wright3}
\sum_{n=1}^\infty s_n \dpow xn 
  =-\log\biggl(1-\sum_{n=1}^\infty 2^{\binom n2}t_n\dpow xn \biggr).
\end{equation}

Although the coefficients of $\eta_n(u,y)$ are not in general nonnegative, we can derive a one-parameter refinement of the formula $\eta_n = 2^{\binom n2}t_n$ with nonnegative coefficients from Theorems \ref{StrongTournRec} and \ref{t-strong}. Note that $\eta_2=\eta_2(1,1)=0$ and $\eta_2(u,y) = -1+uy^2$. This suggests that if we want a specialization of $
\eta_n(u,y)$ with nonnegative coefficients, we might try setting $u=y^{-2}$.

\begin{prop}
The polynomials $\eta_n(u,y)$ defined by \eqref{e-etarec} and the descent polynomials  for strong tournaments by descents $t_n(u)$, determined by \eqref{e-str}, are related by 
\begin{equation}
\label{e-eta-t}
\eta_n(y^{-2},y) = (1+y)^{\binom n2}t_n(y\inverse).
\end{equation}
\end{prop}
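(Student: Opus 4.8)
The plan is to prove \eqref{e-eta-t} by induction on $n$, verifying that $(1+y)^{\binom n2}t_n(y\inverse)$ satisfies the recurrence \eqref{e-etarec} at $u=y^{-2}$; since that recurrence determines $\eta_n(u,y)$ uniquely, this suffices. The base case $n=1$ is immediate, since \eqref{e-str} and \eqref{e-etarec} both give $t_1(u)=\eta_1(u,y)=1$, and $(1+y)^{0}t_1(y\inverse)=1$.

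The first preliminary step is to record the effect of the substitution $u=y^{-2}$ on the ingredients of \eqref{e-etarec}. We have
\[ 1+uy = 1+y\inverse = \frac{1+y}{y}, \qquad q=\frac{1+uy}{1+y}=y\inverse. \]
In particular the parameter $q$ in the $q$-binomial coefficients of \eqref{e-etarec} becomes $y\inverse$, which is exactly the value taken by $u$ in $\ubinom nk$ after the substitution $u=y\inverse$ in the strong-tournament recurrence \eqref{e-str}; thus the $q$-binomial and $u$-binomial coefficients occurring in the two recurrences coincide term by term.

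Next I would start from \eqref{e-str} evaluated at $u=y\inverse$ and multiply through by $(1+y)^{\binom n2}$. The leading term becomes $(1+y)^{\binom n2}(1+y\inverse)^{\binom n2}=(1+y)^{\binom n2}(1+uy)^{\binom n2}$ with $u=y^{-2}$, matching the leading term of \eqref{e-etarec} exactly. For the summand indexed by $k$, I would invoke the induction hypothesis in the form $t_k(y\inverse)=(1+y)^{-\binom k2}\eta_k(y^{-2},y)$ to rewrite the coefficient of the binomial, namely $(1+y)^{\binom n2}(1+y\inverse)^{\binom{n-k}{2}}t_k(y\inverse)$, as
\[ (1+y)^{\binom n2-\binom k2}\Bigl(\tfrac{1+y}{y}\Bigr)^{\binom{n-k}{2}}\eta_k(y^{-2},y). \]
Comparing with the coefficient $(1+uy)^{\binom{n-k}{2}}(1+y)^{(n-k)(n+k-1)/2}$ of $\eta_k$ in \eqref{e-etarec}, and using $(1+uy)^{\binom{n-k}{2}}=\bigl((1+y)/y\bigr)^{\binom{n-k}{2}}$, the two agree precisely when the remaining powers of $(1+y)$ match.

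The crux is therefore the single exponent identity
\[ \binom n2-\binom k2 = \frac{(n-k)(n+k-1)}{2}, \]
an easy computation with both sides equal to $\tfrac12(n^2-n-k^2+k)$. Once this is in hand every factor lines up, so the right-hand side of \eqref{e-eta-t} satisfies \eqref{e-etarec} at $u=y^{-2}$ and the induction closes. I expect no genuine obstacle; the only care needed is the bookkeeping of powers of $y$ and $(1+y)$, in particular keeping the factor $(1+y\inverse)^{\binom{n-k}{2}}=\bigl((1+y)/y\bigr)^{\binom{n-k}{2}}$ coming from the $t$-side and the factor $(1+uy)^{\binom{n-k}{2}}=\bigl((1+y)/y\bigr)^{\binom{n-k}{2}}$ coming from the $\eta$-side properly identified rather than double-counted.
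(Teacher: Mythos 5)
Your proof is correct and is essentially the paper's own argument: the paper also proves \eqref{e-eta-t} by checking that both sides satisfy the recurrence \eqref{e-etarec} after the substitutions $u=y^{-2}$, $q=y^{-1}$ (its remark that the recurrence ``does not require any initial values'' is exactly your strong induction), and it hinges on the same exponent identity $\binom n2-\binom k2=(n-k)(n+k-1)/2$ together with the identification of the $q$-binomial at $q=y^{-1}$ with the $u$-binomial at $u=y^{-1}$. Your bookkeeping of the factors $(1+y^{-1})^{\binom{n-k}2}$ and the induction-hypothesis substitution $t_k(y^{-1})=(1+y)^{-\binom k2}\eta_k(y^{-2},y)$ is sound, so there is no gap.
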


\begin{proof}
We show that both sides of \eqref{e-eta-t} satisfy the same recurrence. (This recurrence does not require any initial values.)
If $u=y^{-2}$ then $1+uy=1+y^{-1}$ and $q=(1+uy)/(1+y)=y^{-1}$, so setting $u=y^{-2}$ in  \eqref{e-etarec} gives
\begin{multline*}
\qquad
\eta_n(y^{-2},y) = (1+y)^{\binom n2}(1+y\inverse)^{\binom n2}\\
  -\sum_{k=1}^{n-1} \binom{n}{k}_{\!\!y\inverse}  (1+y\inverse)^{\binom{n-k}{2}}
  (1+y)^{(n-k)(n+k-1)/2} \eta_k(y^{-2},y).
\qquad
\end{multline*}

Setting $u=y^{-1}$ in \eqref{e-str}, multiplying by $(1+y)^{\binom n2}$, and simplifying gives
\begin{multline*}
\quad
(1+y)^{\binom n2}t_n(y\inverse)=(1+y)^{\binom n2}(1+y\inverse)^{\binom n2}\\
 -\sum_{k=1}^{n-1} \binom {n}{k}_{\!\!y\inverse} (1+y)^{\binom n2}(1+y\inverse)^{\binom{n-k}2} (1+y)^{-\binom k2}\cdot (1+y)^{\binom k2}t_k(y\inverse).
 \quad
\end{multline*}
Then \eqref{e-eta-t} follows by comparing these two recurrences and using  $(1+y)^{(n-k)(n+k-1)/2}=
(1+y)^{\binom n2}(1+y)^{-\binom k2}$.
\end{proof}

Applying \eqref{e-eta-t} to \eqref{e-SE} gives
\begin{equation}
\label{e-sdt}
\sumz n s_n(y^{-2},y) \dpow xn 
   = -\log\biggl(1-\sum_{n=1}^\infty (1+y)^{\binom n2}t_n(y^{-1})\dpow xn \biggr).
\end{equation}

If two exponential generating functions $f$ and $g$ are related by $f=-\log(1-g)=\sum_{n=1}^\infty (n-1)!\,g^n\!/n!$ then $f$ may be interpreted as counting cycles of the objects counted by $g$, so we might hope that \eqref{e-wright3} and \eqref{e-sdt} could be explained combinatorially by a bijection from strong digraphs to cycles of strong tournaments with some additional structure. But we have not been able to find such a bijection.

\section{Acyclic Digraphs and Trees}\label{sec:acyclic}

\subsection{Acyclic Digraphs}\label{sec:acyclic digraphs}
We begin this section by enumerating acyclic digraphs by their number of edges, descents, and sources. We again make use of Eulerian graphic generating functions, and also follow closely the proof in \cite{G96} for enumeration of acyclic digraphs by sources and edges (which is based on Robinson's proof \cite{racyclic}).

Let 
\[a_n(u,y;\beta)= \sum_D u^{\des(D)}y^{\e(D)}\beta^{\source(D)},\]
 where the sum is over all acyclic digraphs $D$ on
the vertex set $[n]$,  $\e(D)$ is the number of edges of
$D$, and $\source(D)$ is the number of sources of $D$; that is, the number of
vertices of $D$ of in-degree 0. Let $a_n(u,y)=a_n(u,y;1)$.

To count acyclic digraphs by sources we take an acyclic digraph and add
some new vertices as sources. The new vertices
will be a subset of the set of sources of the expanded digraph. This gives a formula expressing $a_n(u,y; \beta+1)$ in terms of $a_j(u,y)$ for $j\le n$. Since every nonempty acyclic digraph has at least one source, the formula for $a_n(u,y;0)$ gives a recurrence for $a_n(u,y)$.

\begin{lem}
\label{l-acyclic}
For every nonnegative integer $n$, we have
\begin{equation}
\label{e-ac-rec}
\sum_{i=0}^n \qbinom{n}{i} (1+y)^{i(n-i)}\beta^i a_{n-i}(u,y)=a_n(u,y;\beta+1).
\end{equation}
\end{lem}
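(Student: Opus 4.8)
The plan is to prove \eqref{e-ac-rec} by counting, in two ways, the weighted number of ordered pairs $(D,C)$ in which $D$ is an acyclic digraph on $[n]$ and $C$ is a subset of the set of sources of $D$, assigning to each such pair the weight $\alpha^{|C|}u^{\des(D)}y^{\e(D)}$. This mirrors the two-way count used in the proof of Theorem~\ref{t-strong}, with ``source strong components'' replaced by ``sources.'' For the first count, I would choose $D$ as an arbitrary acyclic digraph on $[n]$ and then choose $C$ as an arbitrary subset of its sources: each of the $\source(D)$ sources is independently either included in $C$ (contributing $\alpha$) or excluded (contributing $1$), so summing over $C$ produces the factor $(1+\alpha)^{\source(D)}$. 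Hence the total weight equals $\sum_D u^{\des(D)}y^{\e(D)}(1+\alpha)^{\source(D)} = a_n(u,y;\alpha+1)$, which is the right-hand side of \eqref{e-ac-rec}.

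For the second count, I would build the pair by first choosing the set $S:=C$ of designated sources, with $|S|=i$. Because every vertex of $S$ is a source of $D$, no edge of $D$ enters $S$; in particular there is no edge within $S$. Consequently $D$ is determined by an arbitrary acyclic digraph $D'$ on the remaining vertex set $[n]-S$ together with an arbitrary subset $A\subseteq S\times([n]-S)$ of edges directed out of $S$, and conversely every such triple $(S,D',A)$ yields a valid pair. Here $\des(D)=\des(A)+\des(D')$ and $\e(D)=|A|+\e(D')$. Summing $u^{\des(A)}y^{|A|}$ over all $i$-subsets $S$ of $[n]$ and all $A\subseteq S\times([n]-S)$ gives exactly $\qbinom ni(1+y)^{i(n-i)}$ by Lemma~\ref{l-qbinom} (with $q=(1+uy)/(1+y)$); the factor $D'$ contributes $a_{n-i}(u,y)$; and the choice $C=S$ contributes $\alpha^i$. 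Summing over $i$ from $0$ to $n$ then produces the left-hand side of \eqref{e-ac-rec}, completing the argument.

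The step I expect to be the crux is verifying the structural claim underlying the second count: that requiring the vertices of $S$ to have in-degree $0$ makes the decomposition $D\leftrightarrow(S,D',A)$ a genuine bijection which automatically preserves acyclicity. The key observation is that since no edge enters $S$, no directed cycle can pass through a vertex of $S$, so $D$ is acyclic if and only if $D'$ is acyclic, and every vertex of $S$ is indeed a source of $D$. A secondary point to state carefully is that $\des$ and $\e$ depend only on the relative order of the vertex labels, so the descent-edge polynomial of acyclic digraphs on an arbitrary $(n-i)$-element set of integers coincides with that on $[n-i]$, namely $a_{n-i}(u,y)$. Once these two points are in place, the descent-edge bookkeeping is precisely Lemma~\ref{l-qbinom}, and the identity follows.
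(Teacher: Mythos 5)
Your proof is correct and takes essentially the same approach as the paper's: both count pairs $(D,C)$, with $C$ a subset of the sources of the acyclic digraph $D$, weighted by $\alpha^{|C|}u^{\des(D)}y^{\e(D)}$, in two ways, and both invoke Lemma~\ref{l-qbinom} to handle the edges in $C\times([n]-C)$. Your explicit verification that the decomposition is a bijection preserving acyclicity is a point the paper leaves implicit, but the argument is the same.
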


\begin{proof}
We count ordered pairs $(D,C)$, where $D$ is an acyclic digraph on $[n]$ and $C$ is a subset of the set of sources of $D$. To such a pair we assign the weight $u^{\des(D)} y^{\e(D)}\beta^{|C|}$.  We compute the sum of the weights of these pairs in two ways.

First, we may choose $D$ as an acyclic digraph on $[n]$ and then choose $C$ as an arbitrary subset of the sources of $D$. Thus the sum of the weights is $a_n(u,y;\beta+1)$. 

We may also count pairs $(D,C)$ by first choosing a subset $C$ of $[n]$, choosing a digraph $D'$ on $[n]-C$ and choosing a subset $E$ of $C\times ([n]-C)$. We then construct $D$ by adding to $D'$ the elements of $C$ as vertices and the elements of $E$ as edges. Then $\des(D) = \des(E)+\des(D')$ and 
$\e(D) = |E| + \e(D')$. Then it follows from Lemma \ref{l-qbinom}  that the sum of the weights of the pairs $(D,C)$ in which $|C|=i$ is $\tqbinom{n}{i}(1+y)^{i(n-i)}\beta^i a_{n-i}(u,y)$, and summing on $i$ gives the left side of \eqref{e-ac-rec}.
\end{proof}

\begin{thm} 
\label{t-acyclic}
Let $a_n(u,y)$ be the descent-edge polynomial for the set of acyclic digraphs on $n$ vertices and let $A(x)$ be the Eulerian graphic generating function for $a_n(u,y)$ where $q = (1+uy)/(1+y)$.  Then
\begin{equation}
\label{e-A(x)}
A(x) = \left(\sumz n (-1)^n \dpowbq xn \right)^{-1}.
\end{equation}

More generally, the Eulerian graphic generating function for $a_n(u,y;\beta)$ is 
\begin{equation}
\label{e-A(x,a)}
\left(\sumz n  (\beta -1)^n\dpowbq xn \right)\biggm/
   \left(\sumz n (-1)^n \dpowbq xn \right)\qedhere.
\end{equation}

\end{thm}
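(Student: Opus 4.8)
The plan is to translate the recurrence of Lemma~\ref{l-acyclic} directly into an identity of Eulerian graphic generating functions, using the multiplication rule stated in Section~\ref{SectionEulerianGraphic}. The key observation is that the left-hand side of \eqref{e-ac-rec}, namely $\sum_{i=0}^n \qbinom{n}{i}(1+y)^{i(n-i)}\alpha^i a_{n-i}(u,y)$, has exactly the shape of the coefficient $c_n = \sum_{i=0}^n \qbinom{n}{i}(1+y)^{i(n-i)} a_i b_{n-i}$ that arises when two Eulerian graphic generating functions are multiplied, with the choice $a_i = \alpha^i$ and $b_{n-i} = a_{n-i}(u,y)$. Writing $A_\alpha(x)$ for the Eulerian graphic generating function of $a_n(u,y;\alpha)$ (so that $A(x) = A_1(x)$), the recurrence \eqref{e-ac-rec} therefore becomes the clean factorization
\begin{equation*}
\left(\sumz n \alpha^n \dpowbq{x}{n}\right) A(x) = A_{\alpha+1}(x).
\end{equation*}

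Next I would specialize $\alpha = -1$. Here the only point requiring a moment's care is the evaluation of the right-hand side: since every nonempty acyclic digraph has at least one source, we have $a_n(u,y;0) = 0$ for $n\ge 1$, while the empty digraph contributes $a_0(u,y;0) = 1$ (its source-count is $0$, so the $\alpha^{\source(D)}$ weight is $\alpha^0 = 1$ for every $\alpha$). Hence $A_0(x) = 1$, and substituting gives $\bigl(\sumz n (-1)^n \dpowbq{x}{n}\bigr) A(x) = 1$. Solving for $A(x)$ yields \eqref{e-A(x)}.

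Finally, for the general formula \eqref{e-A(x,a)} I would replace $\alpha$ by $\alpha-1$ in the factorization above to obtain
\begin{equation*}
A_\alpha(x) = \left(\sumz n (\alpha-1)^n \dpowbq{x}{n}\right) A(x),
\end{equation*}
and then substitute the expression for $A(x)$ just established. This gives exactly the quotient in \eqref{e-A(x,a)}.

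In effect the argument is mechanical once the product structure is recognized: all of the genuine combinatorial content lives in Lemma~\ref{l-acyclic}, and the only subtlety in the present proof is the verification that $A_0(x) = 1$ (the vanishing $a_n(u,y;0) = 0$ for $n\ge 1$ together with the empty-digraph term $a_0 = 1$). I do not anticipate a real obstacle; the main thing to get right is the bookkeeping that matches the weights $\alpha^i$ against the first factor and $a_{n-i}(u,y)$ against $A(x)$ in the multiplication rule.
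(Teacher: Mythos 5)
Your proposal is correct and follows essentially the same route as the paper's own proof: translating Lemma~\ref{l-acyclic} into the generating-function identity $\bigl(\sum_n \alpha^n x^n/n!_q(1+y)^{\binom n2}\bigr)A(x) = A_{\alpha+1}(x)$, setting $\alpha=-1$ using $a_n(u,y;0)=0$ for $n>0$, and then replacing $\alpha$ by $\alpha-1$ to get the general formula. Your explicit check of the empty-digraph term $a_0(u,y;0)=1$ is a small point of added care that the paper leaves implicit.
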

\begin{proof}
Equation \eqref{e-ac-rec} is equivalent to 
\begin{equation}
\label{e-A2}
\sumz n a_n(u, y; \beta+1) \dpowbq xn = 
  \left(\sumz n \beta ^n \dpowbq xn \right)A(x).
\end{equation}
Setting $\beta=-1$ in \eqref{e-A2}, and using the fact that $a_n(u,y;0) = 0$ for $n>0$ gives \eqref{e-A(x)}. Then replacing $\beta$ by $\beta-1$ in \eqref{e-A2} and applying \eqref{e-A(x)} gives \eqref{e-A(x,a)}.
\end{proof}

An interesting special case of \eqref{e-A(x,a)} is obtained by setting $u=0$, so that we are counting (acyclic) digraphs with no descents by the number of sources. We find that 
\begin{equation*}
a_n(0, y; \beta) = \prod_{i=0}^{n-1}\bigl( \beta + (1+y)^i-1\bigr).
\end{equation*}
This is not difficult to prove directly: since $a_0(0,y;\beta)=1$, it is enough to show that for $n>0$ we have 
\begin{equation}
\label{e-a0}
a_n(0, y; \beta) = a_{n-1}(0, y; \beta)\bigl( \beta + (1+y)^n-1\bigr).
\end{equation}
To prove \eqref{e-a0}, we note that every acyclic digraph on $[n]$ with no descents is obtained from an acyclic digraph on $[n-1]$ with no descents by adding $n$ as a vertex, together with some of the edges $(i,n)$ for $i\in [n-1]$. If none of these edges are added then $n$ is a source; otherwise, $n$ is not a source. Equation \eqref{e-a0} follows immediately from this construction.

From either \eqref{e-ac-rec} or \eqref{e-A(x)} we obtain a recurrence for $a_n(u,y)$:
\begin{cor}\label{AcyclicRec} Let $a_n(u,y)$ be the descent-edge polynomial for the set of acyclic digraphs on $n$ vertices. Then
\[ a_n(u,y) = \sum_{i=0}^{n-1} (-1)^{n-i-1}\qbinom{n}{i} (1+y)^{i(n-i)}a_{i}(u,y) \]
where $q = (1+uy)/(1+y)$.\qed
\end{cor}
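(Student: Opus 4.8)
The plan is to read the recurrence straight off the closed form \eqref{e-A(x)} for the Eulerian graphic generating function $A(x)$ from Theorem~\ref{t-acyclic}, using the multiplication rule for these series. Rewriting \eqref{e-A(x)} as
\[ A(x)\left(\sumz n (-1)^n \dpowbq xn\right)=1, \]
I would extract the coefficient of $x^n/\bigl(n!_q(1+y)^{\binom n2}\bigr)$ from both sides.

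By the product formula for modified Eulerian graphic generating functions (with $q=(1+uy)/(1+y)$), namely $c_n=\sum_{i=0}^n \qbinom ni (1+y)^{i(n-i)}a_i b_{n-i}$, the left-hand coefficient is $\sum_{i=0}^n \qbinom ni (1+y)^{i(n-i)} a_i(u,y)\,(-1)^{n-i}$, while the right-hand coefficient is $1$ for $n=0$ and $0$ for $n>0$. For $n>0$ I would then peel off the $i=n$ term, which is simply $\qbinom nn (1+y)^{0}a_n(u,y)(-1)^0=a_n(u,y)$, move the remaining terms to the other side, and use $-(-1)^{n-i}=(-1)^{n-i-1}$ to land exactly on the claimed formula.

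Alternatively --- and this is the route suggested by the phrase ``from either'' in the statement --- I would start from \eqref{e-ac-rec} of Lemma~\ref{l-acyclic}, set $\alpha=-1$, and invoke $a_n(u,y;0)=0$ for $n>0$; isolating the $i=0$ summand (which equals $a_n(u,y)$) and re-indexing via $i\mapsto n-i$, using the symmetries $\qbinom ni=\qbinom n{n-i}$ and $i(n-i)=(n-i)i$, produces the same recurrence. In either derivation there is no genuine obstacle: the entire content is a single coefficient extraction, and the only point requiring care is the sign and index bookkeeping --- specifically making sure the $a_n(u,y)$ term is correctly isolated and that the overall sign is rewritten as $(-1)^{n-i-1}$. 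I would favor the generating-function route, since it isolates $a_n(u,y)$ from the top end of the sum and thereby avoids the re-indexing step entirely.
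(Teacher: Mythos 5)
Your proposal is correct and is exactly what the paper intends: the corollary is stated without proof precisely because, as you note, it follows by coefficient extraction from \eqref{e-A(x)} (isolating the $i=n$ term) or by setting $\alpha=-1$ in \eqref{e-ac-rec} and re-indexing, and both of your derivations, including the sign bookkeeping $-(-1)^{n-i}=(-1)^{n-i-1}$ and the symmetry $\qbinom ni=\qbinom n{n-i}$, are carried out correctly. No gaps.
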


The polynomials $a_n(u, y)$ for the first few values of $n$ are given by  $a_1(u,y) =1,$ $a_2(u,y)= 1 + y + uy$, and
\[ a_3(u,y) = 1 + (3 + 3u)y + (3 + 6u + 3u^2)y^2 + (1 + 2u + 2u^2 + u^3)y^3.\] Coefficients of $a_n(u,y)$ for larger $n$ can be computed from the formula in Corollary~{\ref{AcyclicRec}}; we provide the values of $a_n(u,1)$ for small values of $n$ in Table~\ref{NumberAcyclicDescent}.

\begin{table}
\begin{tabular}{c||r|r|r|r|r|r|r}
\diagbox{$u$}{$n$} & 1 & 2 & \multicolumn{1}{c|}{3} & \multicolumn{1}{c|}{4} & \multicolumn{1}{c|}{5} & \multicolumn{1}{c|}{6} & \multicolumn{1}{c|}{7}  \\ \hline \hline
0 & 1 & 2 & 8 & 64 & 1,024 & 32,768 & 2,097,152\\
1 & 0 & 1 & 11 & 161 & 3,927 & 172,665 & 14,208,231\\
2 & 0 & 0 & 5 & 167 & 6,698 & 419,364& 45,263,175\\
3 & 0 & 0 & 1 & 102 & 7,185 & 656,733& 94,040,848\\
4 & 0 & 0 & 0 & 39 & 5,477 & 757,939&145,990,526\\
5 & 0 & 0 & 0 & 9 & 3,107 & 686,425& 181,444,276\\
6 & 0 & 0 & 0 & 1 & 1,329 & 504,084& 187,742,937\\
7 & 0 & 0 & 0 & 0 & 423 & 305,207& 165,596,535\\
8 & 0 & 0 & 0 & 0 & 96 & 153,333& 126,344,492\\
9 & 0 & 0 & 0 & 0 & 14 & 63,789& 84,115,442\\
10 & 0 & 0 & 0 & 0 & 1 & 21,752& 49,085,984\\
11 & 0 & 0 & 0 & 0 & 0 & 5,959& 25,134,230\\
12 & 0 & 0 & 0 & 0 & 0 &1,267& 11,270,307\\
 13 & 0 & 0 & 0 & 0 & 0 & 197 & 4,403,313\\
 14 & 0 & 0 & 0 & 0 & 0 & 20& 1,486,423\\
 15 & 0 & 0 & 0 & 0 & 0 & 1& 428,139\\
 16 & 0 & 0 & 0 & 0 & 0 & 0 & 103,345\\
 17 & 0 & 0 & 0 & 0 & 0 & 0 & 20,369 \\
 18 & 0 & 0 & 0 & 0 & 0 & 0 & 3,153 \\
 19 & 0 & 0 & 0 & 0 & 0 & 0 & 360 \\
 20 & 0 & 0 & 0 & 0 & 0 & 0 & 27 \\
 21 & 0 & 0 & 0 & 0 & 0 & 0 & 1 \\ 
 \hline
 TOTAL & 1 & 3 & 25 & 543 & 29,281 & 3,781,503 & 1,138,779,265
\end{tabular}
\caption{Values of $a_{n}(u,1)$, the number of acyclic digraphs on $n$ vertices with $u$ descents, for $n \leq 7$.  The total is the number of labeled acyclic digraphs on $n$ vertices.}
\label{NumberAcyclicDescent}
\end{table}

Robinson \cite{racyclic, rstrong} gave a common generalization of the case $u=1$ of Theorems \ref{t-acyclic} and \ref{t-strong}. Given a class $\mathscr{S}$ of strong digraphs he found a relation between the generating function for digraphs in $\mathscr{S}$ and the generating function for digraphs all of whose strong components are in $\mathscr{S}$. If $\mathscr{S}$ is the class of all strong digraphs we get the case $u=1$ of Theorem \ref{t-strong} and if $\mathscr{S}$ is the class of 1-vertex graphs we get the case $u=1$ of Theorem \ref{t-acyclic}. Our approach could be applied to extend Robinson's generalization to include descents.

\subsection{Chromatic polynomials}\label{sec:chromatic}

Stanley \cite{sacyclic} derived the generating function for acyclic digraphs from his theorem \cite[Corollary 1.3]{sacyclic} that if $\chi_G(\lambda)$ is the chromatic polynomial of a graph $G$ with $n$ vertices, then the number of acyclic orientations of $G$ is $(-1)^n\chi_G(-1)$. We can use his approach to derive in another way the Eulerian graphic generating function \eqref{e-A(x)} for acyclic digraphs by edges and descents, using  an interesting generalization of the chromatic polynomial.

Let us first sketch Stanley's approach. By applying the combinatorial interpretation of multiplication of graphic generating functions, we can show that for $\lambda$ a nonnegative integer, the coefficient of $x^n/n!\,(1+y)^{\binom n2}$ in 
\begin{equation}
\label{e-color}
\Biggl(\sumz n \dpowgy xn\Biggr)^\lambda
\end{equation}
counts ordered partitions of $[n]$ into $\lambda$ blocks (possibly empty), together with an arbitrary set of (undirected) edges between elements of different blocks, where each edge has weight $y$.  If we think of a vertex in the $i$th block as colored in color $i$, then we may describe these objects as $\lambda$-colored graphs, i.e., graphs in which the vertices are colored using colors chosen from $\{1,2,\dots, \lambda\}$ so that  adjacent vertices have different colors, as shown (for $y=1$) by Read \cite{read}. Thus for $\lambda$ a nonnegative integer, the coefficient of $x^n/n!\,(1+y)^{\binom n2}$ in \eqref{e-color} is the sum over all graphs $G$ on $[n]$ of $y^{\e(G)}\chi_G(\lambda)$, where $\e(G)$ is the number of edges of $G$. But if two polynomials in $\lambda$ are equal whenever $\lambda$ is a nonnegative integer then they are equal as polynomials in $\lambda$, and in particular, they are equal for $\lambda=-1$.
Applying Stanley's theorem on acyclic orientations, we find that setting $\lambda=-1$ in \eqref{e-color}, and replacing $x$ with $-x$,
gives the graphic generating function for acyclic digraphs. (Stanley only considered the case $y=1$ but the extension to counting by edges is straightforward.)

To include descents, we consider a generalization of the chromatic polynomial. Let $G$ be a graph with totally ordered vertices. 
We define a \emph{descent} of a proper coloring  $c$ of $G$ to be an edge $\{i,j\}$ of $G$  with $i<j$ and $c(i)>c(j)$.  We may define the \emph{refined chromatic polynomial} $X_G(\lambda)$ to be $\sum_{c}u^{\des(c)}$ where the sum is over all proper colorings $c$ of $G$ with colors chosen from  $\{1, 2, \dots, \lambda\}$, and $\des(c)$ is the number of descents of the coloring $c$. (It is not hard to show that $X_G(\lambda)$ is indeed a polynomial in $\lambda$; this follows from the proof of Theorem \ref{t-rcp} below.)
We note that $X_G(\lambda)$ is a specialization of the  chromatic quasisymmetric function introduced by Shareshian and Wachs \cite{sw}.

Then we have the following analogue of Stanley's theorem on acyclic orientations.

\begin{thm}
\label{t-rcp}
Let $G$ be a graph on a totally ordered $n$-element vertex set and let $X_G(\lambda)$ be the refined chromatic polynomial of $G$. Then 
\begin{equation*}
X_G(-1)=(-1)^n \sum_O u^{\des(O)}
\end{equation*}
where the sum is over all acyclic orientations $O$ of $G$.
\end{thm}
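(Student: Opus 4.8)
The plan is to generalize Stanley's reciprocity proof by tracking the descent statistic through a sign-reversing interpretation of $X_G(-\lambda)$. First I would establish the polynomiality and a combinatorial meaning of $X_G(\lambda)$ for positive integers $\lambda$: a proper coloring $c\colon [n]\to\{1,\dots,\lambda\}$ contributes $u^{\des(c)}$, where a descent is an edge $\{i,j\}$ with $i<j$ but $c(i)>c(j)$. The natural approach is to set up a signed sum. Following the classical argument, I would consider pairs $(O,c)$ where $O$ is an acyclic orientation of $G$ and $c$ is a \emph{compatible} coloring, meaning $c$ is weakly monotone along the edges in a direction dictated by $O$ (for an edge oriented $(a,b)$ in $O$, one requires $c(a)\ge c(b)$, say). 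The key identity to prove is that the generating polynomial counting such compatible pairs, weighted appropriately by $u$, equals $\sum_O u^{\des(O)}\binom{\lambda + \text{something}}{\cdots}$-type expression whose value at $\lambda=-1$ collapses to a single term per orientation.

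The central step is to find the correct ``$u$-weighted order polynomial'' playing the role of Stanley's order polynomial $\Omega$. For a fixed acyclic orientation $O$, I would count colorings weakly compatible with $O$ by a statistic that records descents of $O$ (edges $\{i,j\}$, $i<j$, oriented from the larger label to the smaller, i.e.\ oriented $(j,i)$), refining the standard count of monotone labelings of the poset defined by $O$. In the $u=1$ case this count is the order polynomial $\Omega_O(\lambda)$, and Stanley's reciprocity gives $\Omega_O(-\lambda)=(-1)^n \Omega^{\circ}_O(\lambda)$, with the strict order polynomial at $\lambda=1$ equal to $1$. I expect the $u$-refinement to factor as a product over a linear extension / over the vertices in a way that isolates, at $\lambda=-1$, exactly the weight $u^{\des(O)}$ for each $O$. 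I would prove the refined reciprocity by the inclusion--exclusion that expresses $X_G(\lambda)=\sum_O (\text{weighted strict-compatible count})$ and then apply the one-variable reciprocity factorwise, checking that the descent weights are preserved.

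Concretely, the key steps in order are: (1) show $X_G(\lambda)=\sum_c u^{\des(c)}$ is a polynomial in $\lambda$ by expressing it as $\sum_{O}\bar\Omega_O(\lambda,u)$, where $\bar\Omega_O(\lambda,u)$ is the $u$-weighted generating function for colorings strictly compatible with $O$ — this simultaneously proves polynomiality (promised in the statement's parenthetical remark) and sets up reciprocity; (2) establish a reciprocity $\bar\Omega_O(-1,u)=(-1)^n u^{\des(O)}$ for each fixed $O$, which is the heart of the argument; (3) sum over all acyclic orientations $O$ to obtain $X_G(-1)=(-1)^n\sum_O u^{\des(O)}$. Setting $u=1$ must recover Stanley's theorem $(-1)^n\chi_G(-1)=\#\{\text{acyclic orientations}\}$, which is a useful consistency check.

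The hard part will be step (2): correctly defining the descent-refined compatible-coloring count $\bar\Omega_O$ and proving that its reciprocity singles out precisely $u^{\des(O)}$ rather than some other descent-type statistic. The subtlety is that descents of a \emph{coloring} and descents of an \emph{orientation} are defined relative to the fixed vertex order on $[n]$, and one must verify that under the colorings-to-orientations decomposition the $u$-weight telescopes correctly; in Stanley's unweighted proof this is invisible because everything collapses to a count. I anticipate that the cleanest route is to linearly order the vertices compatibly and evaluate $\bar\Omega_O$ as a product of geometric-type factors, each of which contributes a factor of $u$ exactly when the corresponding edge is a descent of $O$, then invoke the standard single-variable polynomial reciprocity $\tbinom{\lambda}{k}\mapsto(-1)^k\tbinom{k-\lambda-1}{k}$ factor by factor at $\lambda=-1$. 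Verifying that this factorization genuinely respects $\des(O)$ — and does not accidentally double-count or miscount descents shared between coloring and orientation — is the main obstacle.
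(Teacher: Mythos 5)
Your overall skeleton matches the paper's: decompose $X_G(\lambda)$ over acyclic orientations $O$ into counts of colorings strictly compatible with $O$, apply a reciprocity for each $O$, and sum. But there is a genuine gap at exactly the place you flag as the ``heart of the argument'' and the ``main obstacle'': you leave unproven the refined reciprocity $\bar\Omega_O(-1,u)=(-1)^n u^{\des(O)}$, and the route you propose for it is unlikely to work. Order polynomials of the poset determined by $O$ do not factor as products of ``geometric-type factors'' over a linear extension; in the standard $(P,\omega)$-partition theory they are \emph{sums} over linear extensions of binomial coefficients, so a ``factor by factor'' application of $\binom{\lambda}{k}\mapsto(-1)^k\binom{k-\lambda-1}{k}$ has nothing to attach to in general. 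As written, step (2) of your plan is a true statement with no proof behind it.

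The missing idea is a one-line observation that makes the refined reciprocity unnecessary: if $c$ is a proper coloring and $O_c$ is the orientation directing each edge from its lower-colored to its higher-colored endpoint, then $\des(c)=\des(O_c)$. Indeed, an edge $\{i,j\}$ with $i<j$ is a descent of $c$ (meaning $c(i)>c(j)$) precisely when $O_c$ orients it as $(j,i)$, which is precisely when it is a descent of the digraph $O_c$. Consequently the weight $u^{\des(c)}$ is \emph{constant} on the fiber of colorings compatible with a fixed $O$, so your two-variable object collapses:
\begin{equation*}
\bar\Omega_O(\lambda,u)=u^{\des(O)}\,\bar\Omega(\bar O,\lambda),
\end{equation*}
where $\bar\Omega(\bar O,\lambda)$ is the ordinary (one-variable) strict order polynomial of the transitive--reflexive closure $\bar O$. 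Then $X_G(\lambda)=\sum_O u^{\des(O)}\bar\Omega(\bar O,\lambda)$, and no $u$-refinement of reciprocity is needed: the classical special case $\bar\Omega(\bar O,-1)=(-1)^n\,\Omega(\bar O,1)=(-1)^n$ of Stanley's order-polynomial reciprocity finishes the proof, exactly as in the paper. Your worry about whether ``the $u$-weight telescopes correctly'' between coloring-descents and orientation-descents is answered by this identity of statistics, not by any new reciprocity theorem; without it, your plan stalls at its central step.
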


\begin{proof}[Proof sketch]
We follow closely Stanley's second proof in \cite{sacyclic} of the case $u=1$, to which we  refer for definitions not given here. To each proper coloring $c$ of $G$, we associate an acyclic orientation $O_c$ of $G$ in which each edge is directed from the lower-colored endpoint to the higher-colored endpoint. Then the coloring $c$ and the acyclic orientation $O_c$ have the same number of descents. As in Stanley's proof, the number of proper $\lambda$-colorings associated with a given acyclic orientation $O$ is the strict order polynomial $\bar\Omega(\bar O, \lambda)$, where $\bar O$ is the transitive and reflexive closure of $O$, regarded as a binary relation on the vertex set. 
Thus
\begin{equation*}
X_G(\lambda) = \sum_O u^{\des(O)}\bar\Omega(\bar O, \lambda),
\end{equation*}
where the sum is over all acyclic orientations of $G$. It is known, as a special case of Stanley's reciprocity theorem for order polynomials, that for every acyclic orientation $O$ we have
$\bar\Omega(\bar O, -1)=(-1)^n$, and the result follows.
\end{proof}

We can now give another proof of \eqref{e-A(x)}, counting acyclic digraphs by descents and edges.
Using Lemma \ref{l-qbinom}, we can show by induction on $\lambda$ that for $\lambda$ a nonnegative integer, the coefficient  of $x^n/n!_q (1+y)^{\binom n2}$ in 
\begin{equation*}
\Biggl(\sumz n \dpowbq xn\Biggr)^\lambda
\end{equation*}
counts $\lambda$-colored graphs on $[n]$, with edges weighted by $y$ and descents weighted by~$u$. Thus this coefficient is the sum 
\begin{equation*}
\sum_G y^{\e(G)}X_G(\lambda)
\end{equation*}
over all graphs $G$ on $[n]$. Setting $\lambda=-1$ and using
Theorem \ref{t-rcp} gives \eqref{e-A(x)}.

\subsection{Trees}\label{sec:trees}
We can use the basic idea of Lemma \ref{l-acyclic} to count rooted trees and forests by descents. 
Recall that we define a rooted tree (\emph{tree} for short) to be an acyclic digraph in which one vertex (the root) has outdegree 0 and every other vertex has outdegree 1. The vertices of indegree 0 are called \emph{leaves} but if the tree contains only one vertex, we do not consider this vertex to be a leaf. 
A (rooted) \emph{forest} is a digraph in which every weak component is a tree.
Let $t_n(u; \beta)$ be the sum of the weights of all trees with vertex set $[n]$, where the weight of a tree with $i$ descents and $j$ leaves is $u^i\beta^j$, and let $T(x,u; \beta) = \sumz n t_n(u; \beta) x^n/n!$.

We first illustrate the approach with $u=1$. The same approach to counting trees was taken in \cite{G96}. The result was stated there as a recurrence, but here we use exponential generating functions directly since  the analogue for general $u$, discussed  in Proposition \ref{p-tdl} below, would be more complicated as a recurrence.

Writing $T(x; \beta)$ for $T(x,1; \beta)$ (counting trees by leaves) and $T(x)$ for $T(x,1;1)$ (just counting trees), we will show that 
\begin{equation}
\label{e-tree1}
T(x; \beta+1) = T(xe^{\beta x}).
\end{equation}
The left side of \eqref{e-tree1} counts trees in which some subset of the leaves are marked, where each marked leaf is weighted $\beta$. To interpret the right side of \eqref{e-tree1}, we assume that the reader is familiar with the combinatorics of exponential generating functions, as described, for example, in \cite[Chapter 5]{ec2}.
The exponential generating function $xe^{\beta x}$ counts ``short trees": trees in which every vertex other than the root is a leaf, where the leaves are weighted by $\beta$. Then $T(xe^{\beta x}; 1)$
is the exponential generating function for structures obtained from rooted trees by replacing each vertex with a short tree. It is clear that these structures are essentially the same as the trees counted by the left side; the marked leaves corresponding to the leaves of the short trees.

Setting $\beta=-1$ in \eqref{e-tree1} gives $T(xe^{-x})=x$. In other words $T(x)$ is the compositional inverse of $xe^{-x}$, so $T(x)e^{-T(x)}=x$, or $T(x) = xe^{T(x)}$, the more common form of the functional equation for $T(x)$. These equations can be solved by Lagrange inversion or other methods to obtain the well-known formula 
\begin{equation*}
T(x) = \sum_{n=1}^\infty n^{n-1}\dpow xn,
\end{equation*}
and  more generally, 
\begin{equation*}
e^{zT(x)}=\sumz n z(z+n)^{n-1}\dpow xn,
\end{equation*}
which counts forests of rooted trees by the number of trees.
There is also a simple functional  equation for $T(x;\beta)=T(xe^{(\beta -1)x})$, which  counts trees by leaves. 
From the functional equation $T(x) = xe^{T(x)}$ we can easily obtain the functional equation for $T(x;\beta)=T(xe^{(\beta -1)x})$:
\begin{equation}
\label{e-T(x;alpha)}
T(x;\beta) = xe^{T(x;\beta) +(\beta-1)x}
\end{equation}
Equation \eqref{e-T(x;alpha)} is easy to see combinatorially, interpreting $T(x;\beta)+(\beta-1)x$ as counting trees by leaves, but now considering the root of a one-vertex tree to be a leaf.

Next, we can generalize \eqref{e-tree1} to keep track of descents. The argument is essentially the same as for \eqref{e-tree1} but we need to replace $xe^{\beta x}$ with something a little more complicated.
\begin{prop}
\label{p-tdl}
The exponential generating function $T(x,u;\beta)$ for trees by descents and leaves satisfies
\begin{equation}
\label{e-tree2}
T(x, u;\beta+1) = T\left(\frac{e^{\beta x} - e^{\beta u x}}{\beta(1-u)}, u\right),
\end{equation}
where  $T(x,u) = T(x,u;1)$ is the 
exponential generating function for trees by descents. Moreover, $T(x,u)$ is the
compositional inverse \textup{(}as a power series in $x$\textup{)} of  
\begin{equation*}
\frac{e^{-x} - e^{-u x}}{u-1} = \sum_{n=1}^\infty (-1)^{n-1}(1+u+\cdots +u^{n-1})\dpow xn.
\end{equation*}
\end{prop}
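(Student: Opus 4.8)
The plan is to imitate the proof of \eqref{e-tree1}, replacing the short-tree generating function $xe^{\alpha x}$ by its descent-refined analogue and then setting $\alpha=-1$ to extract the compositional inverse. Recall that a \emph{short tree} is a tree in which every vertex other than the root is a leaf. First I would identify the inner function: a short tree on an $m$-element vertex set with root $r$ (take the set to be $[m]$) has $m-1$ leaves, each edge $(\ell,r)$ being a descent precisely when $\ell>r$, so it has exactly $m-r$ descents. Summing $u^{m-r}\alpha^{m-1}$ over $r\in[m]$ shows that the exponential generating function for short trees by descents (weight $u$) and leaves (weight $\alpha$) is
\[
\sum_{m\ge1}\alpha^{m-1}(1+u+\cdots+u^{m-1})\frac{x^m}{m!}=\frac{e^{\alpha x}-e^{\alpha ux}}{\alpha(1-u)},
\]
which is exactly the inner function appearing in \eqref{e-tree2}; call it $f(x,u;\alpha)$.

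Next I would set up the decomposition, exactly as in Lemma \ref{l-acyclic}. The coefficient of $x^n/n!$ in $T(x,u;\alpha+1)$ counts pairs $(\tau,M)$ where $\tau$ is a tree on $[n]$ and $M$ is a subset of its leaves, weighted $u^{\des(\tau)}\alpha^{|M|}$. Deleting the vertices of $M$ leaves a tree on $[n]\setminus M$ (the nonmarked vertices), and each marked leaf is attached to a nonmarked vertex, since a marked leaf, being a leaf, cannot itself be a parent. Grouping each nonmarked vertex $v$ with the marked leaves attached to it produces a short tree rooted at $v$, and the nonmarked vertices with the induced edges form a \emph{skeleton} tree. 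The descents of $\tau$ split as the descents internal to the short trees plus the descents of the skeleton tree, where a skeleton edge is a descent exactly when the child short tree's root exceeds the parent short tree's root. This is precisely the ``replace each vertex by a short tree'' operation used for \eqref{e-tree1}, so I would conclude $T(x,u;\alpha+1)=T\bigl(f(x,u;\alpha),u\bigr)$.

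The step that needs care -- and the main obstacle -- is verifying that the exponential-generating-function composition $T(f,u)$ records the skeleton descents correctly. Expanding $T(f,u)=\sum_k t_k(u;1)\,f^k/k!$ realizes the sum over set partitions $\pi$ of $[n]$, a short tree on each block, and a tree on the blocks weighted by the tree descent polynomial $t_{|\pi|}(u;1)$. The subtlety is that the skeleton descents must compare the \emph{roots} of the blocks, whereas the composition only supplies the value $t_{|\pi|}(u;1)$. The resolution is that $t_k(u;1)$ is invariant under order-isomorphism, so it equals $\sum_\sigma u^{\des_\prec(\sigma)}$ for the linear order $\prec$ on the blocks induced by their roots; with this order the skeleton descents match the between-block descents of the assembled tree $\tau$, while the internal descents are supplied by $f$. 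This identifies the coefficient of $x^n/n!$ in $T(f,u)$ with $\sum_{(\tau,M)}u^{\des(\tau)}\alpha^{|M|}=t_n(u;\alpha+1)$ and establishes \eqref{e-tree2}.

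Finally, the compositional-inverse statement follows by specializing $\alpha=-1$. Then in $T(x,u;0)=\sum_n t_n(u;0)x^n/n!$ the factor $0^{\#\text{leaves}}$ kills every tree with a leaf, so only the one-vertex tree contributes and $T(x,u;0)=x$. Hence $T\bigl(g(x),u\bigr)=x$ with $g(x)=\dfrac{e^{-x}-e^{-ux}}{u-1}$, that is, $T(x,u)$ is the compositional inverse of $g$; expanding $g$ as a power series then yields the displayed series $\sum_{n\ge1}(-1)^{n-1}(1+u+\cdots+u^{n-1})x^n/n!$.
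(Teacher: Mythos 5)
Your proposal is correct and follows essentially the same route as the paper: compute the descent--leaf generating function $\bigl(e^{\alpha x}-e^{\alpha u x}\bigr)/\alpha(1-u)$ for short trees, interpret $T(\,\cdot\,,u)$ composed with it as replacing each vertex of a tree by a short tree (the marked leaves being the short trees' leaves), and then set $\alpha=-1$, using $T(x,u;0)=x$. The only difference is one of detail: the paper compresses the substitution step into ``in the same way that we obtained \eqref{e-tree1},'' whereas you spell out the skeleton decomposition and the order-isomorphism invariance of $t_k(u)$ that makes the exponential-generating-function composition track the between-block descents correctly—a worthwhile clarification, and your count of $m-r$ descents for the short tree with root $r$ is in fact more careful than the paper's phrasing.
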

\begin{proof}
A short tree on $[n]$ with root $i+1$ has $i$ descents. Thus the exponential generating function for short trees, with descents weighted by $u$ and leaves weighted by $\beta$, is 
\begin{equation*}
\sum_{n=1}^\infty (1+u+\cdots +u^{n-1})\beta^{n-1}\dpow xn
 = \sum_{n=1}^\infty\frac{1-u^n}{1-u}\beta^{n-1}\dpow xn
 =\frac{e^{\beta x} - e^{\beta u x}}{\beta(1-u)}.
\end{equation*}
Then we obtain \eqref{e-tree2} in the same way that we obtained \eqref{e-tree1}.
As before, $T(x,u; 0) = x$, so setting $\beta=-1$ in \eqref{e-tree2} gives
\begin{equation*}
T\left(\frac{e^{-x} - e^{-u x}}{u-1}, u\right)=x.\qedhere
\end{equation*}
\end{proof}

Another combinatorial proof that $T(x,u)$ is the compositional inverse of $(e^{-x}-e^{-ux})/(u-1)$ was given by Drake \cite[Example 1.7.2]{drake}.

There is a simple formula for the coefficients of $T(x,u)$ that can be derived from our results and known formulas.

\begin{prop}
\label{p-trees}
For the exponential generating function $T(x,u)$ for trees by descents, we have the formulas
\begin{equation*}
T(x,u) = \sum_{n=1}^\infty \prod_{i=1}^{n-1} (iu+n-i)\dpow xn
\end{equation*}
and
\begin{equation*}
e^{zT(x,u)}=1+\sum_{n=1}^\infty z\prod_{i=1}^{n-1} (iu+n-i+z)\dpow xn.
\end{equation*}
\end{prop}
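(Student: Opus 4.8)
The plan is to combine the compositional-inverse relation from Proposition~\ref{p-tdl} with Lagrange inversion and a classical evaluation of generalized Bernoulli (Nörlund) polynomials. By Proposition~\ref{p-tdl}, $T(x,u)$ is the compositional inverse of $g(x)=(e^{-x}-e^{-ux})/(u-1)$. Writing $g(t)=t/R(t)$, so that
\[ R(t)=\frac{(u-1)t}{e^{-t}-e^{-ut}},\qquad R(0)=1, \]
the inverse $T=T(x,u)$ satisfies the functional equation $T=xR(T)$, which is exactly the form needed for Lagrange inversion. I would prove the second (more general) formula first and recover the first from its linear term in $z$.

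Applying Lagrange inversion with $\Phi(t)=e^{zt}$ gives
\[ [x^n]\,e^{zT(x,u)}=\frac1n[t^{n-1}]\,z\,e^{zt}R(t)^n=\frac{z}{n}(u-1)^n\,[t^{-1}]\frac{e^{zt}}{(e^{-t}-e^{-ut})^n}. \]
I would then factor $e^{-t}-e^{-ut}=e^{-ut}(e^{(u-1)t}-1)$ and substitute $w=(u-1)t$; after accounting for the Jacobian of the substitution (which removes one factor of $u-1$), the residue becomes a coefficient of $e^{\beta w}/(e^w-1)^n$ with $\beta=(z+nu)/(u-1)$, yielding
\[ [x^n]\,e^{zT(x,u)}=\frac{z}{n}(u-1)^{n-1}\,[w^{n-1}]\left(\frac{w}{e^w-1}\right)^{\!n}e^{\beta w}. \]

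The crux — and the step I expect to be the main obstacle — is the evaluation of this last coefficient. It equals $B^{(n)}_{n-1}(\beta)/(n-1)!$, where $B^{(n)}_{n-1}$ is the Nörlund polynomial, and the key input is the classical product identity $B^{(n)}_{n-1}(\beta)=(\beta-1)(\beta-2)\cdots(\beta-n+1)$. Granting this, $(n-1)!$ converts the prefactor $z/n$ into $z/n!$, and I would absorb the factor $(u-1)^{n-1}$ into the product one term at a time: each factor $(u-1)(\beta-j)$ becomes $z+nu-(u-1)j=z+u(n-j)+j$. Reindexing by $i=n-j$ turns the product into $\prod_{i=1}^{n-1}(iu+n-i+z)$, giving the second formula. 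Finally, extracting the coefficient of $z$ from $e^{zT(x,u)}=1+zT(x,u)+\cdots$ gives $[x^n]T(x,u)=\tfrac1{n!}\prod_{i=1}^{n-1}(iu+n-i)$, the first formula.

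The only nonroutine ingredient is the Nörlund evaluation; the Lagrange inversion and the residue manipulation are mechanical. If a self-contained argument is preferred, the identity $B^{(n)}_{n-1}(\beta)=\prod_{j=1}^{n-1}(\beta-j)$ can be checked by showing that both sides satisfy the same recurrence in $n$ (or by a short induction comparing the order-$n$ poles). A useful sanity check throughout is the case $u=1$, where $g$ degenerates but the final product collapses to $(z+n)^{n-1}$, recovering the known series $e^{zT(x)}=\sum_{n\ge0} z(z+n)^{n-1}x^n/n!$ quoted earlier in the paper.
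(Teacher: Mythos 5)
Your proof is correct, and it starts from the same place as the paper's---the compositional-inverse relation of Proposition~\ref{p-tdl} followed by Lagrange inversion---but the execution is genuinely different. The paper does not apply Lagrange inversion to $T=xR(T)$ directly; instead it multiplies $(e^{-T}-e^{-uT})/(u-1)=x$ by $(1-u)e^{T}$ to obtain $e^{(1-u)T}-1=(1-u)xe^{T}$, and sets $G=e^{(1-u)T}$, reducing everything to the algebraic functional equation $G=1+(1-u)xG^{1/(1-u)}$. Lagrange inversion applied to that equation (or the results cited from \cite{GS}) produces only a binomial coefficient,
\[
[x^n]\,G^{z/(1-u)}=\frac{z}{n}\,(1-u)^{n-1}\binom{(z+n)/(1-u)-1}{n-1},
\]
which telescopes into $\frac{z}{n!}\prod_{i=1}^{n-1}(iu+n-i+z)$ by the same reindexing you perform at the end. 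Your route keeps the exponential form and therefore pays for it with the residue $[w^{n-1}]\,\bigl(w/(e^w-1)\bigr)^n e^{\beta w}$, i.e., the N\"orlund evaluation $B^{(n)}_{n-1}(\beta)=(\beta-1)(\beta-2)\cdots(\beta-n+1)$. That identity is classical and your use of it is correct (your Jacobian bookkeeping and the reindexing $(u-1)(\beta-j)=z+u(n-j)+j$ both check out), so the argument goes through; but note that the N\"orlund identity is itself essentially a Lagrange inversion computation (inverting $s=e^t-1$ with $\Phi(t)=e^{xt}$ gives $\binom{x}{n}=\frac{x}{n!}B^{(n)}_{n-1}(x)$), so the paper's substitution $G=e^{(1-u)T}$ can be viewed as absorbing exactly this step, yielding a proof that is self-contained modulo only binomial coefficients. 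One point worth making explicit rather than leaving as a ``sanity check'': your manipulations (division by $u-1$, the substitution $w=(u-1)t$, and $\beta=(z+nu)/(u-1)$) take place over $\mathbb{Q}(u)$, and the case $u=1$ is then covered because both sides of the final identities are polynomials in $u$ and $z$, so an identity valid over $\mathbb{Q}(u,z)$ holds identically.
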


\begin{proof}
Since $T(x,u)$ is the compositional inverse of $(e^{-x} - e^{-u x})/(u-1)$, we have
\begin{equation*}
\frac{e^{-T(x,u)} - e^{-u T(x,u)}}{u-1} = x.
\end{equation*}
Multiplying both sides by $(1-u)e^{T(x,u)}$ gives
\begin{equation}
\label{e-Txu}
e^{(1-u)T(x,u)}-1 = (1-u)xe^{T(x,u)}.
\end{equation}
Now set $G=e^{(1-u)T(x,u)}$. Then \eqref{e-Txu} may be written
\begin{equation*}
G = 1+ (1-u)xG^{1/(1-u)},
\end{equation*}
and the desired formulas follow from the results of \cite[Section 5]{GS} or by Lagrange inversion (see, e.g., \cite[Section 3.3]{GLagrange}).
\end{proof}

We note that the formulas of Proposition \ref{p-trees} are proved by a different method in 
\cite[Section 9]{GS}, and more general enumerative results for trees have been proved bijectively by 
E\u gecio\u glu and Remmel \cite{ER}.

Forests have been counted by leaves and descents of a different kind in \cite{desc-leaves} but there does not seem to be any connection between the results described here and the results of \cite{desc-leaves}.

\textbf{Acknowledgment.} We would like to thank two anonymous referees for helpful comments.

\end{document}